\theoremstyle{thmstyleone}%
\newtheorem{theorem}{Theorem}[section]
\newtheorem{proposition}[theorem]{Proposition}%
\newtheorem{lemma}[theorem]{Lemma}%
\newtheorem{corollary}[theorem]{Corollary}%
\newtheorem{remark}[theorem]{Remark}
\theoremstyle{thmstylethree}%
\newtheorem{definition}[theorem]{Definition}%
\begin{document}

\title[Additive properties and absorption laws for generalized inverses]{Additive properties and absorption laws for generalized inverses}


\author[1]{\fnm{Yukun} \sur{Zhou}}\email{2516856280@qq.com}

\author[2]{\fnm{Jianlong} \sur{Chen}}\email{jlchen@seu.edu.cn}

\author*[3]{\fnm{N\'{e}stor} \sur{ Thome}}\email{njthome@mat.upv.es}


\affil[1]{\orgdiv{School of Physical and Mathematical Sciences}, \orgname{Nanjing Tech University}, \orgaddress{\city{Nanjing}, \postcode{211816}, \country{China}}}

\affil[2]{\orgdiv{School of mathematics}, \orgname{Southeast University}, \orgaddress{\city{Nanjing}, \postcode{ 210096},\country{China}}}

\affil[3]{\orgdiv{Instituto Universitario de Matem\'{a}tica Multidisciplinar}, \orgname{Universitat Polit\`{e}cnica de Val\`{e}ncia}, \orgaddress{\city{Valencia}, \postcode{ 46022},\country{Spain}}}


\abstract{Let $a,~f$ be elements in a ring with pseudo core inverses $a^{\scriptsize\textcircled{\tiny D}}$, $f^{\scriptsize\textcircled{\tiny D}}$, and let $b=f-a$.  We prove that the absorption law  $a^{\scriptsize\textcircled{\tiny D}}(a+f)f^{\scriptsize\textcircled{\tiny D}}=a^{\scriptsize\textcircled{\tiny D}}+f^{\scriptsize\textcircled{\tiny D}}$ holds if and only if $1+a^{\scriptsize\textcircled{\tiny D}}b$ is invertible and  the additive property $f^{\scriptsize\textcircled{\tiny D}}=(1+a^{\scriptsize\textcircled{\tiny D}}b)^{-1}a^{\scriptsize\textcircled{\tiny D}}$  is satisfied. We further characterize these properties and establish analogous results for other generalized inverses. Finally, we apply these results to the case of complex matrices.}


\keywords{Additive property, absorption law, Drazin inverse, pseudo core inverse, $\ast$-DMP element.}


\pacs[MSC Classification]{15A09, 16U90}

\maketitle
\section { \bf Introduction}

In the square complex matrices enviroment, let $A,~B\in \mathbb{C}^{n\times n}$, and define $F=A+B$, the sum of both matrices. As it is well known, if  $A$ is invertible and $I+A^{-1}B$ is invertible, then the following additive property holds:
$$F^{-1}=(I+A^{-1}B)^{-1}A^{-1}.$$ In this case, the absorption law takes the form: $A^{-1}(A+F)F^{-1}=A^{-1}+F^{-1}$.
It is a natural motivation to generalize these properties to various generalized inverses, particularly the Drazin inverse. We denote the Drazin inverse of $A$ by $A^D$.

\begin{theorem}\emph{\cite{CaKW2000}} The following statements are equivalent:

\begin{itemize}
		\item [{\rm (1)}] $A^{D}(A+F)F^{D}=A^{D}+F^{D}$;
		\item [{\rm (2)}] $I+A^{D}B$ is invertible and $F^{D}=(I+A^{D}B)^{-1}A^{D}$.
	\end{itemize}

\end{theorem}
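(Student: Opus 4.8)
The plan is to reduce everything to bookkeeping with the spectral idempotents $P_A:=AA^D=A^DA$ and $P_F:=FF^D=F^DF$, together with $A^\pi:=I-P_A$ and $F^\pi:=I-P_F$. Since $A+F=2A+B$ and $A^DA=P_A$, expanding gives $A^D(A+F)F^D=(2P_A+A^DB)F^D$, so statement (1) is equivalent to the relation
\[ 2P_AF^D+A^DBF^D=A^D+F^D, \]
which I denote $(\ast)$. Both implications will be obtained by relating $(\ast)$ to the two facts ``$M:=I+A^DB$ is invertible'' and ``$(I+A^DB)F^D=A^D$''.

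For (1)$\Rightarrow$(2), I first isolate the auxiliary identity $P_AF^D=F^D$. Left-multiplying $(\ast)$ by $P_A$ and using $P_A^2=P_A$ together with $P_AA^D=A^D$ (hence $P_AA^DB=A^DB$) yields $2P_AF^D+A^DBF^D=A^D+P_AF^D$; subtracting this from $(\ast)$ leaves exactly $F^D=P_AF^D$. Feeding $P_AF^D=F^D$ back into $(\ast)$ collapses it to $F^D+A^DBF^D=A^D$, i.e. $MF^D=A^D$. It then remains only to prove that $M$ is invertible, since then $F^D=M^{-1}A^D$ is immediate.

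The invertibility of $M$ is the main obstacle, and it is where the Drazin structure is genuinely used. Right-multiplying $MF^D=A^D$ by $F$ and using $F^DF=P_F$ gives $MP_F=A^DF$; since $A^DF=P_A+A^DB=M-I+P_A$, this rearranges to $M(I-P_F)=I-P_A$, that is $MF^\pi=A^\pi$. Adding this to $MF^D=A^D$ produces
\[ M\,(F^D+F^\pi)=A^D+A^\pi. \]
Now both $F^D+F^\pi$ and $A^D+A^\pi$ are invertible: this is the standard fact that, in the core--nilpotent decomposition, $A^D+A^\pi$ acts as $\mathrm{diag}(C^{-1},I)$ with inverse $A^2A^D+A^\pi$, and likewise for $F$. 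Hence $M=(A^D+A^\pi)(F^D+F^\pi)^{-1}$ is a product of invertible elements, so $M$ is invertible and $F^D=M^{-1}A^D$, which is (2).

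The converse (2)$\Rightarrow$(1) is then routine: from $F^D=M^{-1}A^D$ we get $MF^D=A^D$, i.e. $F^D+A^DBF^D=A^D$; left-multiplying by $P_A$ and subtracting recovers $P_AF^D=F^D$ exactly as before, and also gives $A^DBF^D=A^D-F^D$. Substituting both into the left side of $(\ast)$ yields $2P_AF^D+A^DBF^D=2F^D+(A^D-F^D)=A^D+F^D$, which is $(\ast)$, hence (1). Throughout, the only nontrivial ingredient is the invertibility argument of the previous paragraph; the remaining manipulations are purely formal identities among the commuting idempotents $P_A,P_F$ and the Drazin inverses.
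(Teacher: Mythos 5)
Your proof is correct: the reduction of (1) to $2P_AF^D+A^DBF^D=A^D+F^D$, the extraction of $P_AF^D=F^D$ by left-multiplication with $P_A$, the identity $MF^\pi=A^\pi$, and the factorization $M(F^D+F^\pi)=A^D+A^\pi$ with both $A^D+A^\pi$ and $F^D+F^\pi$ invertible all check out. But your route is genuinely different from the paper's. The paper never proves the theorem where it is stated (it is quoted from \cite{CaKW2000}); its own machinery reproves a generalization in Theorem \ref{pD3.8}, specialized to the Drazin inverse in Theorem \ref{pD3.18}. That argument first reduces the absorption law to the single equality of idempotents $ff^{+}=aa^{+}$ by citing \cite[Theorem 4.10]{LiWenDe2023}, then computes $a^{+}bf^{+}=a^{+}-f^{+}$ and verifies by direct cancellation that $1-f^{+}b$ is a two-sided inverse of $1+a^{+}b$; the converse implication is delegated to the perturbation results Lemma \ref{pD1.2} and Theorem \ref{pD2.7}. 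The trade-offs are as follows: the paper's proof yields the explicit inverse $(1+a^{+}b)^{-1}=1-f^{+}b$ and, crucially, never uses commutativity of the generalized inverse with the element, so it survives for minimal weak Drazin inverses, where $x\in T_l(a)$ need not commute with $a$; your argument leans on $A^DA=AA^D$ and $F^DF=FF^D$ (for instance, to identify $MF^DF$ with the same projection $P_F$ whose complement $F^\pi$ you pair with $F^D$), so it is tied to the genuine Drazin inverse. In exchange, your proof is fully self-contained --- no appeal to an external absorption-law characterization --- and your invertibility mechanism, writing $M=(A^D+A^\pi)(F^D+F^\pi)^{-1}$ as a product of invertibles, is arguably cleaner than exhibiting an inverse by cancellation; it also transfers from matrices to any ring if you replace the core--nilpotent justification by the purely algebraic identity $(X^D+X^\pi)(X^2X^D+X^\pi)=1$. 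One cosmetic remark: your closing phrase about ``commuting idempotents $P_A,P_F$'' is harmless but misleading, since nothing in your argument requires $P_A$ and $P_F$ to commute with each other, only with their own matrices.
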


Thereafter, additive properties for generalized inverses were investigated by many researchers. Wei \cite{Wei} gave some necessary and sufficient conditions for $F^{\#}=(I+A^{D}B)^{-1}A^{D}$, where $F^{\#}$ denotes the group inverse of $F$. The relevant results of core inverses \cite{BT} and core-EP inverses \cite{MPM} were also considered. The symbol $A^{\scriptsize\textcircled{\tiny \#}}$ (resp., $A^{\scriptsize\textcircled{\tiny \dag}}$) denotes the core inverse (resp., core-EP inverse) of $A$. We refer the reader to \cite{BT,Ferryralma2018,GC,MPM,RDD,XCZ} for definition and properties of core and core-EP inverses. For instance, Ma et al. \cite{Ma2, Ma} presented some sufficient conditions of $F^{\scriptsize\textcircled{\tiny \#}}=(I+A^{\scriptsize\textcircled{\tiny \#}}B)^{-1}A^{\scriptsize\textcircled{\tiny \#}}$ and $F^{\scriptsize\textcircled{\tiny \dag}}=(I+A^{\scriptsize\textcircled{\tiny \dag}}B)^{-1}A^{\scriptsize\textcircled{\tiny \dag}}$, respectively. This  result concerning core inverses was extended to the bounded linear operators in Hilbert space by Huang et al. \cite{Huang}, and was generalized to $C^{*}$-algebra by Mihajlovic \cite{Mi}. Recently, Mosi\'{c} \cite{Mosicjmaa2024,Mosicam2025} investigated additive properties for several new generalized inverses.

One of hot topics in the study of generalized inverses is the investigation of absorption laws.  In general, absorption laws for generalized inverses may not hold. Consequently, some recent investigations  derived various sufficient (and sometimes necessary) conditions under which absorption laws remain valid. These conditions often involve rank conditions, range inclusions, principal ideals or idempotents (see \cite{Bapat2017,GCWZ2021,Jin2015,LiuAMC2012}).

The present article derives characterizations of additive properties for three classes of generalized inverses: minimal weak Drazin inverses, Drazin inverses, and pseudo core inverses. Furthermore, we establish connections  between these additive properties and the corresponding absorption laws.


\section{Preliminaries}

Let $R$ be a ring with involution $*$ throughout this paper. Now, we recall some known definitions of generalized inverses.

\begin{definition}\emph{\cite{D}} Let $a\in R$. If there exist $x\in R$ and $k\in \mathbb{N}^+$ such that
	$$xa^{k+1}=a^{k},~~~ax^2=x, ~~~xa=ax,$$
	then  $a$ is called to be Drazin invertible and $x$ is called the Drazin inverse of $a$. If the Drazin inverse of $a$ exists, then it is unique and denoted by $a^{D}$.  \end{definition}

The smallest positive integer $k$ satisfies the above equations is called the Drazin index of $a$ and denoted by ${\rm i}(a).$ In particular, $a$ is called the group inverse of $a$ and denoted by $a^{\#}$ when $k\leq 1.$ If $a$ is group invertible and $(aa^{\#})^*=aa^{\#}$, then $a$ is called EP. If there exists $m\in \mathbb{N}^+$ such that $a^m$ is EP, then $a$ is called $\ast$-DMP \cite{MRT,GC1}.

 In 2018, Gao and Chen \cite{GC} extended  the core-EP inverse \cite{MPM} of a complex matrix to a ring with involution and called it pseudo core inverse.

\begin{definition}\emph{\cite{GC}} Let $a\in R$. If there exist $x\in R$ and $k\in \mathbb{N}^+$ such that
	$$xa^{k+1}=a^{k},~ax^{2}=x,~(ax)^{*}=ax,$$
	then $a$ is called to be pseudo core invertible and $x$ is called the pseudo core inverse of $a$. If such $x$ exists, then it is
	unique and denoted by  $a^{\scriptsize\textcircled{\tiny D}}$.
\end{definition}
The smallest positive integer $k$ satisfying the above equations is called the pseudo core index of $a$. From \cite{GC}, we know that if $a$ is pseudo core invertible, then $a$ is Drazin invertible and  the pseudo core index coincides with the Drazin index. This fact allows us to still use ${\rm i}(a)$ to  denote  the pseudo core index of $a$. If ${\rm i}(a)\leq 1$, the pseudo core inverse of $a$ is the core inverse \cite{BT,RDD,XCZ} of $a$ and denoted by $a^{\scriptsize\textcircled{\tiny \#}}$.

The symbols $R^D$ and $R^{\scriptsize\textcircled{\tiny D}}$ denote the sets of all  Drazin invertible, pseudo core invertible elements in $R$, respectively.

\begin{lemma}\label{a2} \emph{\cite{GC}} Let $a\in R$. If there exist $x\in R$ and $k\in \mathbb{N}^{+}$ such that
	$$ ~xa^{k+1}=a^{k}, ~~~ ax^{2}=x,$$
	then
	\begin{itemize}
		\item [{\rm (1)}] $ax=a^{m}x^{m}$ for arbitrary positive integer $m$;
		\item [{\rm (2)}] $xax=x$;
		\item [{\rm (3)}] $a\in R^D$ with $a^D=x^{k+1}a^{k}$.
	\end{itemize}
\end{lemma}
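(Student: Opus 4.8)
The plan is to distil from the two hypotheses $xa^{k+1}=a^{k}$ and $ax^{2}=x$ a handful of ``shift'' identities and then read off the three assertions one after another. The first, coming purely from $ax^{2}=x$, is obtained by right-multiplying by $x^{m-1}$: it yields $ax^{m+1}=x^{m}$ for every $m\ge 1$. The second, coming purely from $xa^{k+1}=a^{k}$, is the telescoping rule $x^{j}a^{k+j}=x^{j-1}a^{k+j-1}$, obtained by isolating the block $xa^{k+1}$ inside the product and replacing it by $a^{k}$. Because $R$ is noncommutative and no relation between $a$ and $x$ is assumed beyond the two hypotheses, I will keep strict track of the order of factors and never silently commute $a$ past $x$.

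For (1) I would induct on $m$: the case $m=1$ is vacuous, and the step $a^{m+1}x^{m+1}=a^{m}\bigl(ax^{m+1}\bigr)=a^{m}x^{m}$ uses the shift identity $ax^{m+1}=x^{m}$, so the whole chain collapses to $ax$. For (2) the efficient route is to multiply the hypothesis $xa^{k+1}=a^{k}$ on the right by $x^{k+1}$. Grouping the left side as $x\bigl(a^{k+1}x^{k+1}\bigr)$ and applying (1) with $m=k+1$ turns it into $x(ax)=xax$; grouping the right side as $\bigl(a^{k}x^{k}\bigr)x$ and applying (1) with $m=k$ followed by $ax^{2}=x$ turns it into $ax^{2}=x$. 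Comparing the two sides gives $xax=x$ at once.

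For (3) I would put $y:=x^{k+1}a^{k}$ and verify the three defining relations of the Drazin inverse with index $k$. Commutativity follows from $ay=\bigl(ax^{k+1}\bigr)a^{k}=x^{k}a^{k}$ and $ya=x^{k+1}a^{k+1}=x^{k}\bigl(xa^{k+1}\bigr)=x^{k}a^{k}$, so $ay=ya$. The outer relation $ay^{2}=y$ rests on the auxiliary identity $a^{k}x^{k+1}=x$ (itself a consequence of (1) together with $ax^{2}=x$): it gives $y^{2}=x^{k+1}\bigl(a^{k}x^{k+1}\bigr)a^{k}=x^{k+2}a^{k}$ and then $ay^{2}=\bigl(ax^{k+2}\bigr)a^{k}=x^{k+1}a^{k}=y$. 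The remaining relation $ya^{k+1}=a^{k}$ is where the genuine bookkeeping lives: one must reduce $ya^{k+1}=x^{k+1}a^{2k+1}$ down to $a^{k}$ by applying the telescoping rule $x^{j}a^{k+j}=x^{j-1}a^{k+j-1}$ exactly $k+1$ times, each pass lowering both exponents by one until the surplus powers of $x$ are exhausted. I expect this last step to be the main obstacle, not because it is deep but because the exponent accounting in the noncommutative product is the one place where an off-by-one slip would break the argument; once it is carried out, $y$ satisfies all three conditions and hence $a\in R^{D}$ with $a^{D}=x^{k+1}a^{k}$.
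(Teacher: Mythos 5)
Your proof is correct in every step: the two shift identities $ax^{m+1}=x^m$ and $x^ja^{k+j}=x^{j-1}a^{k+j-1}$ are valid, the induction for (1), the right-multiplication by $x^{k+1}$ for (2), and the verification that $y=x^{k+1}a^k$ satisfies $ay=ya=x^ka^k$, $ay^2=y$ and $ya^{k+1}=a^k$ (whence $a^D=y$ by uniqueness) all check out, including the exponent bookkeeping in the final telescoping step. The paper itself gives no proof of this lemma --- it is quoted from the reference \cite{GC} --- and your argument is essentially the standard direct-verification proof found there, so there is nothing of substance to contrast.
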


In the rest of this section, suppose that $a\in R^D$. In \cite{ZhouChen1}, the authors denoted
$$T_{l}(a)=\{x\in R: ~xa^{k+1}=a^{k}~\text{for some positive integer} ~k,~ax^{2}=x \}.$$
Also,  it was proved that $T_l(a)=\{x\in R: ~xa^{{\rm i}(a)+1}=a^{{\rm i}(a)},~ax^{2}=x \}$. It was proved in \cite{XuChen2024} that $T_l(a)$ coincides with the set of all minimal weak Drazin inverses of $a$. In other words, an element in $T_l(a)$ is a minimal weak Drazin inverse of $a$. This generalize inverse is closely related to other classes of generalized inverses, see \cite{F,FerreyraMalik2024,MT,MosicQuaest2024,MosicZhangP2024,MosicZhu2024}.

  Besides, we can denote
$$E_{l}(a)=\{e\in R: e^2=e,~eR=aa^DR\}.$$

\begin{lemma}\label{m1}\emph{\cite{ZJ}} Let $x\in T_l(a)$. If there exist  positive integers $m,n$ such that
	$a^{m}x^{n}=x^{n}a^{m},$
	then $x=a^D$.
\end{lemma}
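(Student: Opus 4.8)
The plan is to prove the single identity $x = x\,a a^{D}$, because its right‑hand side equals $a^{D}$, so that $x = a^{D}$ follows at once. First I would record a few elementary identities valid for every $x\in T_l(a)$, writing $l={\rm i}(a)$ for the (pseudo core = Drazin) index, so that $xa^{l+1}=a^{l}$ and $a^{D}=x^{l+1}a^{l}$ by Lemma~\ref{a2}(3). From $ax^{2}=x$ one gets $ax^{j}=x^{j-1}$ for all $j\ge 2$, hence $a^{k}x^{n}=x^{n-k}$ whenever $1\le k\le n-1$. A short computation then yields the key identity $x a^{D} a = a^{D}$: namely $x a^{D} a = x\,x^{l+1}a^{l}a = x^{l+2}a^{l+1} = x^{l+1}(xa^{l+1}) = x^{l+1}a^{l}=a^{D}$. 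I would also invoke the standard Drazin facts $a^{m}=a^{m+1}a^{D}$ and $aa^{D}=a^{D}a$, which give $a^{m}(1-aa^{D})=0$ for every $m\ge l$.

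Second, I would upgrade the hypothesis $a^{m}x^{n}=x^{n}a^{m}$ to one with large, suitably ordered exponents. Multiplying the relation by itself shows $a^{2m}x^{n}=x^{n}a^{2m}$ and $a^{m}x^{2n}=x^{2n}a^{m}$, and iterating gives $a^{sm}x^{tn}=x^{tn}a^{sm}$ for all $s,t\ge 1$. Choosing first $s$ and then $t$ large enough, I obtain exponents $m_{0}=sm\ge l$ and $n_{0}=tn>m_{0}$ with $a^{m_{0}}x^{n_{0}}=x^{n_{0}}a^{m_{0}}$. Now I multiply this equation on the right by $1-aa^{D}$. Since $m_{0}\ge l$, the right side collapses, $x^{n_{0}}a^{m_{0}}(1-aa^{D})=0$, while on the left $a^{m_{0}}x^{n_{0}}=x^{n_{0}-m_{0}}$ because $m_{0}\le n_{0}-1$. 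Setting $r=n_{0}-m_{0}\ge 1$, this yields $x^{r}(1-aa^{D})=0$, i.e. $x^{r}=x^{r}aa^{D}$.

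Finally, I would peel off the extra powers of $x$. Left‑multiplying $x^{r}=x^{r}aa^{D}$ by $a$ and using $ax^{j}=x^{j-1}$ (for $j\ge 2$) lowers the exponent by one while preserving the identity, so after $r-1$ steps one reaches $x=x\,aa^{D}$; combining this with $xa^{D}a=a^{D}$ and $aa^{D}=a^{D}a$ gives $x=a^{D}$. The main obstacle is exactly that the hypothesis supplies only the commutativity of a single pair of, possibly unequal and possibly small, powers $a^{m}$ and $x^{n}$, rather than $ax=xa$ outright; the boosting argument of the second step is what removes this exponent mismatch and pushes $m_{0}$ past the index $l$, after which the nilpotent contribution $a^{m_{0}}(1-aa^{D})$ vanishes and the power‑reduction identities finish the proof.
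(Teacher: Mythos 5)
Your proof is correct, and it is worth noting that the paper itself offers no proof to compare against: Lemma \ref{m1} is quoted from the reference \cite{ZJ} without argument. Your proposal therefore serves as a self-contained verification, and every step checks out. The elementary identities are sound: $ax^{j}=x^{j-1}$ for $j\ge 2$ follows by induction from $ax^{2}=x$, hence $a^{k}x^{n}=x^{n-k}$ for $1\le k\le n-1$; the key identity $xa^{D}a=x^{l+2}a^{l+1}=x^{l+1}a^{l}=a^{D}$ uses only Lemma \ref{a2}(3) and $xa^{l+1}=a^{l}$; and $a^{m}(1-aa^{D})=0$ for $m\ge {\rm i}(a)$ is a standard Drazin fact. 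The boosting step is the real content of the argument — the hypothesis commutes only one pair of fixed powers, and your iteration $a^{sm}x^{tn}=x^{tn}a^{sm}$ (first in $s$, then in $t$, each by a one-line induction using the previously established relation) legitimately produces exponents with $sm\ge {\rm i}(a)$ and $tn>sm$. Multiplying on the right by $1-aa^{D}$ then kills the right-hand side while the left collapses to $x^{r}(1-aa^{D})$ with $r=tn-sm\ge 1$, and the peeling step (left-multiplication by $a$, which lowers the power of $x$ by one and preserves the identity) descends to $x=xaa^{D}=xa^{D}a=a^{D}$. This route — reduce to $x=xaa^{D}$, having separately computed $xa^{D}a=a^{D}$ — cleanly sidesteps any attempt to prove $ax=xa$ directly from the hypothesis, which would be considerably more awkward; it uses nothing beyond Lemma \ref{a2} and the defining Drazin identities, so it could stand in the paper as a proof where currently only a citation appears.
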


\begin{lemma}\label{1.1}\emph{\cite{ZhouChen1}} Let $k_1,..., k_n, s_1, ..., s_n \in \mathbb{N}$ and $x_1, ..., x_{n}\in T_l(a)$.  If $s_n \neq 0$, then\\ $$\prod\limits_{i=1}^n a^{k_i}x_i^{s_i}=a^kx_n^s,~\text{where}~ k={\sum\limits_{i=1}^n k_i}~\text{and}~s={\sum\limits_{i=1}^ns_i }.$$
\end{lemma}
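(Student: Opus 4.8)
The plan is to isolate the whole content of the lemma in a single two-factor ``sliding'' identity and then obtain the general statement by an easy induction on $n$. Writing $t={\rm i}(a)$, I will use only the two defining properties available for \emph{every} element of $T_l(a)$, namely $ax^2=x$ and $xa^{t+1}=a^t$ (the latter in the uniform form $T_l(a)=\{x:xa^{{\rm i}(a)+1}=a^{{\rm i}(a)},\,ax^2=x\}$ recalled earlier). From $ax^2=x$ one gets at once the \emph{absorption rule}
\[
ay^{m}=y^{m-1}\ (m\ge 2),\qquad\text{hence}\qquad a^{p}y^{p+s}=y^{s}\quad(p\ge 0,\ s\ge 1),
\]
valid for every $y\in T_l(a)$; this is the device that lets me trade powers of $a$ for powers of $y$ and back.

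The core step is the two-factor identity
\[
x^{s}a^{k}y^{r}=a^{k}y^{s+r}\qquad(x,y\in T_l(a),\ s,r\ge 1,\ k\ge 0).
\]
I would first treat $s=1$. Starting from $xa^{k}y^{r}$, I rewrite the right-hand factor as $y^{r}=a^{t+1}y^{t+1+r}$ using the absorption rule, so that $xa^{k}y^{r}=xa^{k+t+1}y^{t+1+r}$; now the relation $xa^{t+1}=a^{t}$ fires, giving $xa^{k+t+1}=a^{t+k}$, and a final application of the absorption rule $a^{t}y^{t+1+r}=y^{r+1}$ collapses the expression to $a^{k}y^{r+1}$. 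The general $s$ then follows by peeling off one $x$ at a time, $x^{s}a^{k}y^{r}=x^{s-1}(xa^{k}y^{r})=x^{s-1}a^{k}y^{r+1}=\dots=a^{k}y^{s+r}$, the $y$-exponent staying $\ge 1$ throughout. I expect this ``inflate $y^{r}$ by extra factors of $a$, then absorb $x$ via $xa^{t+1}=a^{t}$'' maneuver to be the only real obstacle; everything else is exponent bookkeeping, and note that it crucially uses the absorption rule only for strictly positive powers of $y$.

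With the two-factor identity in hand, I argue by induction on $n$, the case $n=1$ being trivial. For the inductive step I look at the last two blocks $x_{n-1}^{s_{n-1}}a^{k_{n}}x_{n}^{s_{n}}$ of $\prod_{i=1}^{n}a^{k_{i}}x_{i}^{s_{i}}$. If $s_{n-1}\ge 1$, the two-factor identity (with $s=s_{n-1}$, $r=s_{n}\ge 1$, $k=k_{n}$) turns these two blocks into the single block $a^{k_{n}}x_{n}^{\,s_{n-1}+s_{n}}$, after which the two adjacent powers of $a$ merge and I am left with a product of $n-1$ blocks whose last exponent is still $\ge 1$; the induction hypothesis then finishes it. If $s_{n-1}=0$, the $(n-1)$-th block is just $a^{k_{n-1}}$, which again merges with $a^{k_{n}}$ to reduce the number of blocks, the rightmost exponent remaining $s_{n}\ge 1$. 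In either case the total $a$-exponent and the total $x$-exponent are preserved, so the product equals $a^{k}x_{n}^{s}$ with $k=\sum_{i}k_{i}$ and $s=\sum_{i}s_{i}$. The hypothesis $s_{n}\neq 0$ is exactly what guarantees that the rightmost exponent never drops to $0$ during the reduction, keeping the absorption rule applicable at every stage.
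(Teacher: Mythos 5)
Your proof is correct. Note, however, that this paper never proves Lemma \ref{1.1} at all: it is quoted from \cite{ZhouChen1}, so there is no in-paper argument to compare against, and your write-up has to stand on its own — which it does. The absorption rule $a^{p}y^{p+s}=y^{s}$ follows by induction from $ay^{2}=y$; the sliding identity $xa^{k}y^{r}=a^{k}y^{r+1}$ is correctly derived by inflating $y^{r}=a^{t+1}y^{t+1+r}$, absorbing $x$ through $xa^{t+1}=a^{t}$, and deflating with $a^{t}y^{t+1+r}=y^{r+1}$; and the induction on the number of blocks, with the case split $s_{n-1}\geq 1$ versus $s_{n-1}=0$, preserves both exponent totals and the invariant that the rightmost exponent stays nonzero, which is precisely where the hypothesis $s_{n}\neq 0$ is used. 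Two small observations: first, you do not actually need the uniform-index description $T_l(a)=\{x: xa^{{\rm i}(a)+1}=a^{{\rm i}(a)},\ ax^{2}=x\}$ — your inflate-absorb-deflate step works verbatim with any witness $m$ such that $xa^{m+1}=a^{m}$, so the lemma already follows from the raw definition of $T_l(a)$; second, your two-factor identity subsumes parts of Lemma \ref{a2}, e.g. $xax=x$ is the case $s=k=r=1$ with $y=x$, so your argument is, if anything, slightly more self-contained than the chain of citations the paper relies on.
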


For any $x\in T_l(a)$, by Lemmas \ref{a2} and \ref{1.1}, we know that $(ax)^2=ax$, $axaa^D=a^2(a^D)^2=aa^D$ and  $aa^Dax=a^2x ^2=ax$. That is to say, $ax\in E_l(a)$.

\begin{lemma}\label{pD1.5}
	\begin{eqnarray*}
		\text{The map}~~\varphi:~T_{l}(a)&\longrightarrow& E_{l}(a)\\
		x&\longmapsto& ax
	\end{eqnarray*}is a bijection.
\end{lemma}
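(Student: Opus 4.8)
The plan is to prove bijectivity by exhibiting an explicit two-sided inverse rather than verifying injectivity and surjectivity separately. The paragraph immediately preceding the lemma already establishes that $\varphi$ is well defined (it shows $ax\in E_l(a)$ for every $x\in T_l(a)$), so the natural candidate for the inverse is the map $\psi\colon E_l(a)\to T_l(a)$, $e\mapsto a^D e$. Everything then reduces to two tasks: checking that $\psi$ actually takes values in $T_l(a)$, and checking that $\varphi\circ\psi$ and $\psi\circ\varphi$ are the respective identity maps.

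First I would record the elementary consequences of the two defining conditions of $E_l(a)$. Writing $p=aa^D$, the equality of principal right ideals $eR=pR$ together with $e^2=e$ and $p^2=p$ forces $pe=e$ and $ep=p$ (because $e\in pR$ gives $pe=e$, and $p\in eR$ gives $ep=p$). Combining $ep=p$ with the standard Drazin identity $a^D=pa^D$ then yields the key relation $e a^D=a^D$. On the other side, for $x\in T_l(a)$ I would apply Lemma \ref{1.1} to the product $a\cdot a^D\cdot x$ (legitimate since $a^D,x\in T_l(a)$ and the final exponent is nonzero), which collapses it to $ax^2=x$, that is, $aa^D x=px=x$.

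With these identities the remaining verifications are short. For $e\in E_l(a)$ and $x:=\psi(e)=a^D e$ I compute $ax=pe=e$, whence $ax^2=ex=e a^D e=a^D e=x$ using $e a^D=a^D$; and $xa^{\,{\rm i}(a)+1}=a^D\bigl(e a^{\,{\rm i}(a)+1}\bigr)=a^D a^{\,{\rm i}(a)+1}=a^{\,{\rm i}(a)}$, where $e a^{\,{\rm i}(a)+1}=a^{\,{\rm i}(a)+1}$ follows from $ep=p$ and the Drazin power relation $p a^{\,j}=a^{\,j}$ for $j\ge {\rm i}(a)$. Thus $x\in T_l(a)$, and simultaneously $\varphi(\psi(e))=ax=e$. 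Conversely, $\psi(\varphi(x))=a^D(ax)=aa^D x=px=x$ by the identity extracted from Lemma \ref{1.1}. Hence $\psi=\varphi^{-1}$ and $\varphi$ is a bijection.

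I expect the only real obstacle to be isolating the identities $e a^D=a^D$ and $e a^{\,{\rm i}(a)+1}=a^{\,{\rm i}(a)+1}$, since this is precisely where the range condition $eR=aa^D R$ must be converted into usable multiplicative equations; once $pe=e$ and $ep=p$ are in hand, the rest is routine manipulation with the Drazin inverse and Lemma \ref{1.1}.
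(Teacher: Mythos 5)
Your proposal is correct and follows essentially the same route as the paper: both construct the explicit inverse $e\mapsto a^De$, verify via the relations $aa^De=e$ and $eaa^D=aa^D$ (your $pe=e$, $ep=p$) that it lands in $T_l(a)$, and then check the two compositions collapse to the identity using Lemma \ref{1.1}. The only cosmetic difference is that you isolate the auxiliary identity $ea^D=a^D$ before the verification, whereas the paper manipulates $eaa^D=aa^D$ inline.
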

\begin{proof} Suppose ${\rm i}(a)=k$. We claim that $a^De\in T_l(a)$ for any $e\in E_l(a)$. Since $aa^De=e$ and $eaa^D=aa^D$, it follows that
	\begin{eqnarray*}
		(a^De)a^{k+1}&=&a^De(aa^Da^{k+1})=a^D(eaa^D)a^{k+1}\\
		&=&a^Daa^Da^{k+1}=a^Da^{k+1}=a^k
	\end{eqnarray*}
	and
	\begin{eqnarray*}
		a(a^De)^2=aa^Dea^De=ea^De=ea(a^D)^2e=a(a^D)^2e=a^De.
	\end{eqnarray*}
	Then we can define 	the map
	\begin{eqnarray*}
		~~\phi:~E_{l}(a)&\longrightarrow& T_{l}(a)\\
		e&\longmapsto&a^De.
	\end{eqnarray*}
	By Lemma \ref{1.1}, we get $\varphi\phi(e)=\varphi(a^De)=aa^De=e$, for all $e\in E_l(a)$ and $\phi\varphi(x)=\phi(ax)=a^Dax=x$, for all $x\in T_l(a)$. Therefore, $\varphi$ is a bijection.
\end{proof}

We recall the famous Jacobson's lemma, which is used in the expression of next lemma. Let $x,y\in R$. If $1-xy$ is invertible, then so is $1-yx$.

\begin{lemma}\label{pD1.2} Let $a^{+},a^{\pm}$ be minimal weak  Drazin inverses of $a$. Suppose that $b\in R$  such that  $1+a^{+}b$ is invertible. Let $f=a+b$, ${\alpha}=(1+a^{+}b)^{-1}$, ${\beta}=(1+ba^{+})^{-1}$ and $f_0={\alpha}a^{+}$. Then the following are equivalent:
	\begin{itemize}
		\item[\rm(1)] $ff^2_{0}=f_{0}$;
		
		\item[\rm(2)] $aa^{+}\in ff_0R$;
		
		\item[\rm(3)] $ff_0=aa^{+}$;
		
		\item[\rm(4)] $ff_0R=aa^{+}R$;
		
		\item[\rm(5)] $(1-aa^{+})ba^{+}=0$;
		
		\item[\rm(6)] $(1-aa^{+})baa^{+}=0$;
		
		\item[\rm(7)] $(1-aa^{+}){\beta}a^{+}=0$.
	\end{itemize}
\end{lemma}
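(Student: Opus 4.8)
The plan is to translate all seven conditions into relations between the idempotent $q:=aa^{+}$ and the unit $\beta$, and then read them off from a single formula for $ff_{0}$. First, Jacobson's lemma (recalled just above) guarantees that $\beta=(1+ba^{+})^{-1}$ exists. From $a^{+}(1+ba^{+})=(1+a^{+}b)a^{+}$ I get the symmetric form $f_{0}=\alpha a^{+}=a^{+}\beta$, while Lemma~\ref{a2} gives $a^{+}aa^{+}=a^{+}$ and $a(a^{+})^{2}=a^{+}$, hence $qa^{+}=a^{+}q=a^{+}$ and $(1-q)a^{+}=0$. Using $ba^{+}\beta=(1+ba^{+})\beta-\beta=1-\beta$, I then obtain the master formula
\begin{equation*}
ff_{0}=(q+ba^{+})\beta=1-(1-q)\beta .
\end{equation*}

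The computational key is the identity $\beta(1-q)=1-q$, which follows from $\beta q=q-\beta ba^{+}$ together with $\beta+\beta ba^{+}=1$. It shows at once that $g:=(1-q)\beta$ is idempotent, so $p:=ff_{0}=1-g$ is idempotent, and a short calculation gives $pq=p$ and $qp=q$. With these in hand the ``forward'' equivalences are immediate. Condition (3) says $ff_{0}=q$, i.e.\ $(1-q)\beta=1-q$; multiplying this on the right by $1+ba^{+}$, and conversely multiplying $(1-q)(1+ba^{+})=1-q$ by $\beta$, converts it to and from $(1-q)ba^{+}=0$, so (3)$\Leftrightarrow$(5). Since $pq=p$ holds always, (2) (which reads $q\in pR$, i.e.\ $pq=q$) is equivalent to $p=q$, and (3)$\Rightarrow$(4)$\Rightarrow$(2) are trivial, so (2),(3),(4) are all equivalent. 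Finally $ff_{0}^{2}=f_{0}$ reduces to $gf_{0}=0$, i.e.\ $(1-q)\beta a^{+}\beta=0$, and cancelling $\beta$ on the right (multiplying by $1+ba^{+}$) turns it into (7); thus (1)$\Leftrightarrow$(7).

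The step I expect to be the genuine obstacle is the pair of reverse implications (5)$\Rightarrow$(6) and (7)$\Rightarrow$(3), since each one appears to require cancelling $a^{+}$ on the right, which is not available. The remedy is the structural inclusion $aa^{+}\in a^{+}R$, which I would extract from $E_{l}(a)$: as $aa^{+}\in E_{l}(a)$ we have $aa^{+}\in aa^{+}R=aa^{D}R$, while Lemma~\ref{a2}(3) gives $aa^{D}=a^{D}a=(a^{+})^{k+1}a^{k+1}\in a^{+}R$; hence $q=a^{+}w$ for some $w$. Then (5) yields $(1-q)bq=(1-q)ba^{+}w=0$, which is (6) (while (6)$\Rightarrow$(5) is immediate from $a^{+}=qa^{+}$); and writing $d:=p-q=(1-q)(1-\beta)$, which satisfies $dq=d$, condition (7) is exactly $da^{+}=0$ (because $(1-q)a^{+}=0$), so that $d=dq=(da^{+})w=0$, i.e.\ (3). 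This closes the two remaining loops and completes the full equivalence.
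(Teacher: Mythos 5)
Every individual computation in your proposal is correct, and your route is genuinely different from the paper's: the paper proves the single cycle (1)$\Rightarrow$(2)$\Rightarrow$(3)$\Rightarrow$(5)$\Rightarrow$(6)$\Rightarrow$(7)$\Rightarrow$(1) (plus (1)$\Leftrightarrow$(4)), working from the conjugation formula $ff_0=\beta^{-1}aa^{+}\beta$ and the product-collapsing Lemma \ref{1.1}, whereas you work from the master formula $ff_0=1-(1-q)\beta$ with $q=aa^{+}$, the identity $\beta(1-q)=1-q$, and the inclusion $q\in a^{+}R$. Incidentally, that inclusion is available in one step, $q=aa^{+}=a^{+}a^{2}a^{+}$ by Lemmas \ref{a2} and \ref{1.1} (this is exactly what the paper uses in its step (1)$\Rightarrow$(2)), rather than through $E_l(a)$ and $a^{D}$; but your longer derivation is also valid.

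The gap is in the logical bookkeeping, not in the algebra. The implications you actually establish are: (1)$\Leftrightarrow$(7), (7)$\Rightarrow$(3), (2)$\Leftrightarrow$(3)$\Leftrightarrow$(4), (3)$\Leftrightarrow$(5), and (5)$\Leftrightarrow$(6). This graph is not strongly connected: there is an arrow from the block $\{1,7\}$ into the block $\{2,3,4,5,6\}$, but no path back. In particular, nothing you wrote yields (5)$\Rightarrow$(1), so the claim that proving (5)$\Rightarrow$(6) and (7)$\Rightarrow$(3) ``completes the full equivalence'' is unjustified as stated. Fortunately the missing arrow is a one-liner with tools you already have: if (3) holds, then by your master formula $(1-q)\beta=1-q$, and right-multiplying by $a^{+}$ gives $(1-q)\beta a^{+}=(1-q)a^{+}=0$, which is (7); alternatively, (3)$\Rightarrow$(1) directly, since $ff_0^{2}=(ff_0)f_0=qf_0=qa^{+}\beta=a^{+}\beta=f_0$. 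Adding that sentence closes the loop and makes your proof complete.
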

\begin{proof} At first, we prove that $ff_0$ is idempotent. Since ${\alpha}^{-1}a^{+}=a^{+}{\beta}^{-1}$, it follows $f_0={\alpha}a^{+}=a^{+}{\beta}$. Therefore, $ff_0=(a+b)a^{+}{\beta}=aa^{+}{\beta}+ba^{+}{\beta}=aa^{+}{\beta}+ba^{+}aa^{+}{\beta}={\beta}^{-1}aa^{+}{\beta}$. Then, $(ff_0)^2=ff_0$.
	
	(1)$\Rightarrow$(2): By Lemmas \ref{a2} and \ref{1.1}, we get $aa^{+}=a^{+}a^2a^{+}$. Then, $$aa^{+}=a^{+}{\beta}{\beta}^{-1}a^2a^{+}=f_0{\beta}^{-1}a^2a^{+}=ff^2_0{\beta}^{-1}a^2a^{+}\in ff_0R.$$
	
	(2)$\Rightarrow$(3): Since $aa^{+}\in ff_0R$,  it follows, from Lemma \ref{a2}, that $aa^{+}=ff_0aa^{+}=f{\alpha}a^{+}aa^{+}=f{\alpha}a^{+}=ff_0$.
	
	(3)$\Rightarrow$(5): It is easy to verify according to calculation.
	
	(5)$\Rightarrow$(6): It follows, from Lemmas \ref{a2} and \ref{1.1}, that $(1-aa^{+})baa^{+}=(1-aa^{+})ba^{+}a^2a^{+}=0$.
	
	(6)$\Rightarrow$(7): Since $(1-aa^{+})baa^{+}=0$, we get
	\begin{equation*}
		\begin{split}
			1-aa^{+}&=1-aa^{+}+(1-aa^{+})ba(a^{+})^2\\
			&=1-aa^{+}+(1-aa^{+})ba^{+}\\
			&=(1-aa^{+}){\beta}^{-1}.
		\end{split}
	\end{equation*}
	Hence, $(1-aa^{+}){\beta}=(1-aa^{+})$. Then $(1-aa^{+}){\beta}a^{+}=(1-aa^{+})a^{+}=0$.
	
	(7)$\Rightarrow$(1): By Lemma \ref{a2}, we get
	\begin{equation*}
		\begin{split}
			1-ff_0&=1-{\beta}^{-1}aa^{+}{\beta}={\beta}^{-1}(1-aa^{+}){\beta}\\
			&=(1-aa^{+}){\beta}.
		\end{split}
	\end{equation*}
	So, $(1-ff_0)f_0=(1-aa^{+}){\beta}a^{+}{\beta}=0$. That is, $ff^2_{0}=f_{0}$.
	
	(1)$\Leftrightarrow$(4): It is obvious from the above proof.
	
\end{proof}

\begin{lemma}\label{2.3} Let $a^{+},a^{\pm}$ be minimal weak  Drazin inverses of $a$, and let $b\in R$. Then the following are equivalent:
	\begin{itemize}
		\item[\rm(1)] $1+a^{+}b$ is invertible and $(1-aa^{+})baa^{+}=0$;
		\item[\rm(2)] $1+a^{\pm}b$ is invertible and $(1-aa^{\pm})baa^{\pm}=0$.
	\end{itemize}
	In this case, $(1+a^{+}b)^{-1}a^{\pm}=(1+a^{\pm}b)^{-1}a^{\pm}$.
\end{lemma}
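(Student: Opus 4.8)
The plan is to reduce the comparison of $1+a^{+}b$ and $1+a^{\pm}b$ to a comparison of the two corner rings determined by the idempotents $p:=aa^{+}$ and $q:=aa^{\pm}$. These are idempotent because $ax\in E_l(a)$ for every $x\in T_l(a)$, and as elements of $E_l(a)$ they share the right ideal $aa^{D}R$. A direct application of Lemma \ref{1.1} (together with Lemma \ref{a2}) yields the relations
\[
pq=q,\qquad qp=p,\qquad pa^{+}=a^{+}p=qa^{+}=a^{+},\qquad a^{+}q=a^{\pm},\qquad pa^{\pm}=qa^{\pm}=a^{\pm}q=a^{\pm},\qquad a^{\pm}p=a^{+}.
\]
From $pq=q$ and $qp=p$ one computes $p(1-q)p=p-pqp=0$ and, symmetrically, $q(1-p)q=0$, and these two vanishings are exactly what is needed to verify that
\[
\theta\colon pRp\longrightarrow qRq,\qquad x\longmapsto qxq,
\]
is a ring isomorphism with inverse $y\mapsto pyp$ (multiplicativity follows from $p(1-q)p=0$, and $\theta(p)=qpq=q$). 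This isomorphism is the object that lets one pass between the two minimal weak Drazin inverses.

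I would first dispose of the idempotent conditions. Assuming $bp=pbp$ and using $q=pq$, $p=qp$, one gets $qbq=q(bpq)=qpbpq=pbpq=bpq=bq$, so $(1-p)bp=0\Rightarrow(1-q)bq=0$, and the converse is identical after exchanging the roles of $p,a^{+}$ and $q,a^{\pm}$. Hence the second halves of (1) and (2) are equivalent on their own, and the whole statement reduces to transferring the invertibility of $1+a^{+}b$ to that of $1+a^{\pm}b$ under this common hypothesis. This transfer is the main obstacle: since $a^{+}$ and $a^{\pm}$ need not yield the same idempotent ($p\neq q$ in general), one cannot simply invoke Lemma \ref{pD1.2}, which already presupposes invertibility for the inverse being studied.

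To accomplish the transfer I would use a Peirce (block-triangular) decomposition. Because $pa^{+}=a^{+}$ we have $(1-p)a^{+}b=0$, so $1+a^{+}b$ is upper triangular with respect to $p$: its $(1-p,p)$ corner vanishes and its $(1-p,1-p)$ corner equals $1-p$. A short verification shows that such an element is invertible in $R$ if and only if its $p$-corner $p(1+a^{+}b)p=p+a^{+}bp$ is invertible in $pRp$; the analogous statement holds for $1+a^{\pm}b$ relative to $q$, with $q$-corner $q+a^{\pm}bq$. The subtle point is the converse of this triangular criterion in a general ring, but it is harmless here because the complementary corner is literally the identity $1-p$ (resp.\ $1-q$), which forces the two-sided inverse to be triangular and its corner to be the required inverse. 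Finally $\theta$ matches the two corners: using $qpq=q$, $qa^{+}=a^{+}$, $pq=q$, and then the hypothesis $qbq=bq$ with $a^{\pm}=a^{+}q$, one obtains
\[
\theta(p+a^{+}bp)=q+a^{+}bq=q+a^{\pm}bq .
\]
Since $\theta$ is a ring isomorphism, one corner is invertible precisely when the other is, which gives (1)$\Leftrightarrow$(2).

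For the displayed formula I would exploit that the inverses inherit the triangular shape. Writing $\alpha=(1+a^{+}b)^{-1}$ and $\gamma=(1+a^{\pm}b)^{-1}$, triangularity gives $\alpha p=p\alpha p=A_{1}:=(p+a^{+}bp)^{-1}$ in $pRp$ and $\gamma q=q\gamma q=B_{1}:=(q+a^{\pm}bq)^{-1}=\theta(A_{1})=qA_{1}q$ in $qRq$. Using $pa^{\pm}=a^{\pm}=qa^{\pm}$ I would compute $\alpha a^{\pm}=\alpha p\,a^{\pm}=A_{1}a^{\pm}$ and $\gamma a^{\pm}=\gamma q\,a^{\pm}=B_{1}a^{\pm}=qA_{1}a^{\pm}$. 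Since $A_{1}\in pRp$ and $(1-q)p=p-qp=0$, we have $A_{1}=qA_{1}$, so $A_{1}a^{\pm}=qA_{1}a^{\pm}=B_{1}a^{\pm}$, and therefore $\alpha a^{\pm}=\gamma a^{\pm}$, which is exactly $(1+a^{+}b)^{-1}a^{\pm}=(1+a^{\pm}b)^{-1}a^{\pm}$.
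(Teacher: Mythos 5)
Your proof is correct, but it takes a genuinely different route from the paper's for the key step. The paper first proves, exactly as you do, that the two annihilation conditions $(1-aa^{+})baa^{+}=0$ and $(1-aa^{\pm})baa^{\pm}=0$ are equivalent by a direct product-collapsing computation (Lemmas \ref{a2} and \ref{1.1}); your version of this in the idempotent notation $p=aa^{+}$, $q=aa^{\pm}$ with $pq=q$, $qp=p$ is the same calculation. The divergence is in transferring invertibility: the paper runs a chain of equivalences using Jacobson's lemma twice together with Lemma \ref{1.1},
\[
1+a^{\pm}b \ \text{invertible} \Leftrightarrow 1+ba^{\pm} \Leftrightarrow 1+aa^{+}baa^{+}a^{\pm} \Leftrightarrow 1+ba^{2}(a^{+})^{3}=1+ba^{+} \Leftrightarrow 1+a^{+}b \ \text{invertible},
\]
whereas you set up a Peirce decomposition, observe that $1+a^{+}b$ and $1+a^{\pm}b$ are block upper triangular with identity complementary corner, and match their $p$- and $q$-corners through the corner-ring isomorphism $\theta(x)=qxq$. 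Both are sound; you correctly flag and resolve the one delicate point in your route, namely the converse of the triangular invertibility criterion in a general ring, which works here precisely because the complementary corner is $1-p$ (so the inverse is forced to be triangular too). The paper's argument is shorter and stays entirely at the level of element manipulations, while yours is more structural: it explains why the equivalence holds (the two units live over isomorphic corners of the same eventual-range idempotent class $E_l(a)$), and it yields the final identity $(1+a^{+}b)^{-1}a^{\pm}=(1+a^{\pm}b)^{-1}a^{\pm}$ as a byproduct of triangularity of the inverses, where the paper instead derives it from the separate intertwining identity $(1+a^{+}b)a^{\pm}=a^{\pm}(1+ba^{\pm})$, itself obtained by another application of Lemmas \ref{a2} and \ref{1.1}.
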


\begin{proof} We now claim that $(1-aa^{+})baa^{+}=0$ is equivalent to $(1-aa^{\pm})baa^{\pm}=0$, and we only need to prove the necessity. If $(1-aa^{+})baa^{+}=0$, we get $baa^{+}=aa^{+}baa^{+}$. By Lemmas \ref{a2} and \ref{1.1},
	\begin{eqnarray*}
		baa^{\pm}&=&baa^{+}aa^{\pm}=aa^{+}baa^{+}aa^{\pm}\\
		&=&aa^{\pm}(aa^{+}baa^{+})aa^{\pm}=aa^{\pm}baa^{+}aa^{\pm}\\
		&=&aa^{\pm}baa^{\pm}.
	\end{eqnarray*}
	Thus, $(1-aa^{\pm})baa^{\pm}=0$.

Under the  condition $(1-aa^{+})baa^{+}=0$,  it follows, from Jacobson's lemma and Lemma \ref{1.1}, that
	\begin{eqnarray*}
		1+a^{\pm}b \text{~is invertible} &\Longleftrightarrow& 1+ba^{\pm} \text{~is invertible}\\
		&\Longleftrightarrow& 1+baa^{+}a^{\pm} \text{~is invertible}\\
		&\Longleftrightarrow& 1+aa^{+}baa^{+}a^{\pm} \text{~is invertible}\\
		&\Longleftrightarrow& 1+baa^{+}a^{\pm}aa^{+} \text{~is invertible}\\
		&\Longleftrightarrow& 1+ba^2(a^{+})^3 \text{~is invertible}\\
		&\Longleftrightarrow& 1+ba^{+} \text{~is invertible}\\
		&\Longleftrightarrow& 1+a^{+}b \text{~is invertible}.
	\end{eqnarray*}

	In this case, since
	\begin{eqnarray*}
		(1+a^{+}b)a^{\pm}&=&a^{\pm}+a^{+}ba^{\pm}=a^{\pm}+a^{+}aa^{\pm}ba^{\pm}\\
		&=&a^{\pm}+a^{\pm}ba^{\pm}=a^{\pm}(1+ba^{\pm}),
	\end{eqnarray*}
	we get $(1+a^{+}b)^{-1}a^{\pm}=a^{\pm}(1+ba^{\pm})^{-1}=(1+a^{\pm}b)^{-1}a^{\pm}$.
\end{proof}


\section{The case of minimal weak Drazin inverses}

Throughout this section, suppose that $a\in R^D$ has two minimal weak Drazin inverses $a^{+},a^{\pm}$. Let $b\in R$, $f=a+b$ and $s\in \mathbb{N}^+$. We now give additive properties for minimal weak Drazin inverses.

\begin{theorem}\label{pD2.7} If  $1+a^{+}b$ is invertible, then the following are equivalent:
	\begin{itemize}
		\item[\rm(1)] $(1-aa^{+})baa^{+}=0$ and $(1-aa^{+})(a+b)^s(1-aa^{+})=0$;
		\item[\rm(2)] $f$ has a minimal weak Drazin inverse $(1+a^{+}b)^{-1}a^{+}$ with ${\rm i}(f)\leqslant s$;
		\item[\rm(3)] $f$ has a minimal weak Drazin inverse $(1+a^{+}b)^{-1}a^{\pm}$ with ${\rm i}(f)\leqslant s$.
	\end{itemize}
	In this case, $T_l(f)={\alpha} T_l(a)=\{(1+a^{+}b)^{-1}y:~y\in T_l(a)\}$, where ${\alpha}=(1+a^{+}b)^{-1}$.
\end{theorem}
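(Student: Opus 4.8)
The plan is to reduce statement (2) to a pair of identities and match them with the two parts of (1). Write $\alpha=(1+a^{+}b)^{-1}$, $f_{0}=\alpha a^{+}$ (the candidate in (2)), and $p=aa^{+}$. By Lemma \ref{a2} and the elementary fact that the defining relation $xf^{k+1}=f^{k}$ may be raised to any larger exponent, the assertion that $f_{0}$ is a minimal weak Drazin inverse of $f$ with ${\rm i}(f)\leqslant s$ is equivalent to the conjunction $ff_{0}^{2}=f_{0}$ and $f_{0}f^{s+1}=f^{s}$. The first of these is precisely item (1) of Lemma \ref{pD1.2}, so under the standing hypothesis it is equivalent to $(1-aa^{+})baa^{+}=0$ and moreover forces $ff_{0}=p$. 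Thus only the index identity $f_{0}f^{s+1}=f^{s}$ remains to be matched with $(1-p)f^{s}(1-p)=0$.

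The core of the argument is this matching, carried out under the first condition $(1-p)bp=0$. First I would record two computational facts. (i) $(1-p)ap=0$: indeed $aa^{+}a^{2}a^{+}=a^{2}a^{+}$ by Lemma \ref{1.1}, whence $(1-p)a\cdot aa^{+}=0$; an induction on $s$ based on $(1-p)fp=0$ (which combines (i) with the first condition) then yields $(1-p)f^{s}p=0$ for all $s$. Consequently, given $(1-p)bp=0$, the equality $(1-p)f^{s}=(1-p)f^{s}(1-p)+(1-p)f^{s}p$ shows that $(1-p)f^{s}(1-p)=0$ is equivalent to $pf^{s}=f^{s}$. (ii) Setting $q=f_{0}f$ (an idempotent), one has $f_{0}f^{s+1}=qf^{s}$ and, crucially, $qp=p$; the latter reduces to $(1-a^{+}a)aa^{+}=0$, again via Lemma \ref{1.1} ($a^{+}a^{2}a^{+}=aa^{+}$). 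Granting these, $pf^{s}=f^{s}$ gives $f_{0}f^{s+1}=qf^{s}=q(pf^{s})=(qp)f^{s}=pf^{s}=f^{s}$, while conversely $f_{0}f^{s+1}=f^{s}$ together with $ff_{0}^{2}=f_{0}$ gives $pf^{s}=ff_{0}f^{s}=ff_{0}^{2}f^{s+1}=f_{0}f^{s+1}=f^{s}$. This establishes $(1)\Leftrightarrow(2)$.

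For $(2)\Leftrightarrow(3)$ I would transfer the whole picture from $a^{+}$ to $a^{\pm}$. By Lemma \ref{2.3} the invertibility of $1+a^{\pm}b$ and the first condition $(1-aa^{\pm})baa^{\pm}=0$ follow from their $a^{+}$-counterparts, and Lemma \ref{2.3} also gives $(1+a^{+}b)^{-1}a^{\pm}=(1+a^{\pm}b)^{-1}a^{\pm}$, so the candidate in (3) is exactly the one attached to $a^{\pm}$. Since $aa^{+},aa^{\pm}\in E_{l}(a)$ share the right ideal $aa^{D}R$, they satisfy $aa^{+}\cdot aa^{\pm}=aa^{\pm}$ and $aa^{\pm}\cdot aa^{+}=aa^{+}$, whence the index conditions agree: $aa^{+}f^{s}=f^{s}\Leftrightarrow aa^{\pm}f^{s}=f^{s}$. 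Running the argument of the previous paragraph verbatim with $a^{\pm}$ in place of $a^{+}$ then identifies (3) with the $a^{\pm}$-form of (1), which coincides with (1); hence $(1)\Leftrightarrow(3)$.

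It remains to prove $T_{l}(f)=\alpha T_{l}(a)$. For $\alpha T_{l}(a)\subseteq T_{l}(f)$, take $y\in T_{l}(a)$: Lemma \ref{2.3} (with $y$ in the role of $a^{\pm}$) shows the hypothesis and condition (1) hold for $y$, so $(1)\Rightarrow(2)$ yields $(1+yb)^{-1}y\in T_{l}(f)$, and $(1+yb)^{-1}y=\alpha y$ again by Lemma \ref{2.3}. For the reverse inclusion I would exploit the symmetry $a=f+(-b)$ with base inverse $f_{0}\in T_{l}(f)$: here $1-f_{0}b=\alpha$ is invertible, and the reverse form of (1) holds, namely $(1-ff_{0})(-b)ff_{0}=(1-p)bp=0$ and $(1-p)a^{{\rm i}(a)}(1-p)=0$ (the latter because $aa^{+}a^{{\rm i}(a)}=a^{{\rm i}(a)}$, as $aa^{+}$ and $aa^{D}$ share the same right ideal). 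Applying the already-proved forward inclusion to this reversed data gives $\alpha^{-1}T_{l}(f)\subseteq T_{l}(a)$, i.e. $T_{l}(f)\subseteq\alpha T_{l}(a)$. I expect the principal obstacle to be the matching of the two index identities in the second paragraph, which hinges on the auxiliary relations $(1-p)f^{s}p=0$ and $qp=p$; both must be extracted from Lemma \ref{1.1}, since for these merely outer (not inner) inverses the convenient identities such as $aa^{+}a=a$ are unavailable. The reverse inclusion above is the other delicate point, which the $a=f+(-b)$ duality is designed to handle.
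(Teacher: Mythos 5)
Your first two paragraphs are sound: the reduction of (2) to the pair $ff_0^2=f_0$, $f_0f^{s+1}=f^s$, the use of Lemma \ref{pD1.2} for the first identity, and the matching of the second with $(1-p)f^s(1-p)=0$ via $qp=p$ all check out, and this is essentially the paper's own route to $(1)\Leftrightarrow(2)$. Your fourth paragraph is also correct and is genuinely different from the paper: the paper proves $T_l(f)\subseteq\alpha T_l(a)$ through the bijection of Lemma \ref{pD1.5} ($x=(\alpha a^D)fx$ with $a^Dfx\in T_l(a)$), whereas your duality $a=f+(-b)$ with base inverse $f_0$ and $1+f_0(-b)=\alpha$ invertible lets the already-proved forward inclusion do the work; that is an elegant and valid alternative. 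The genuine gap is in $(3)\Rightarrow(2)$ (equivalently $(3)\Rightarrow(1)$). Your plan identifies the candidate of (3), namely $(1+a^{+}b)^{-1}a^{\pm}$, with $(1+a^{\pm}b)^{-1}a^{\pm}$, and even the invertibility of $1+a^{\pm}b$ is obtained the same way; but both facts come from Lemma \ref{2.3}, whose hypothesis is precisely $(1-aa^{+})baa^{+}=0$ --- the very condition that has to be \emph{deduced} from (3). Every sentence of your third paragraph is premised on the $a^{+}$-counterpart conditions, i.e.\ on (1), so what you actually prove is $(1)\Rightarrow(3)$ together with the equivalence of (1) with the $a^{\pm}$-version of (3); read from (3) alone, the argument is circular and never starts.

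What is missing is a mechanism for extracting $(1-aa^{+})baa^{+}=0$ from the bare data of (3), in which the "mixed" element $\alpha a^{\pm}=(1+a^{+}b)^{-1}a^{\pm}$ satisfies $f(\alpha a^{\pm})^2=\alpha a^{\pm}$ and $\alpha a^{\pm}f^{s+1}=f^{s}$. The paper supplies this by a direct computation with Lemma \ref{1.1}, exploiting the exchange rule that in products of minimal weak Drazin inverses only the total degrees and the \emph{last} factor matter (e.g.\ $(a^{+})^2=a^{\pm}a^{+}$ and $a^{\pm}aa^{+}=a^{+}$): writing $f_0=\alpha a^{+}$ and $\beta=(1+ba^{+})^{-1}$, one has
\begin{equation*}
ff_0^2=f\alpha a^{+}a^{+}\beta=f\alpha a^{\pm}a^{+}\beta=f\alpha a^{\pm}\alpha a^{+}
=f(\alpha a^{\pm})^2aa^{+}=\alpha a^{\pm}aa^{+}=\alpha a^{+}=f_0,
\end{equation*}
and only \emph{then} does Lemma \ref{pD1.2} yield $(1-aa^{+})baa^{+}=0$, after which Lemma \ref{2.3} legitimizes the identification $\alpha a^{\pm}=(1+a^{\pm}b)^{-1}a^{\pm}$ and the index identity $f_0f^{s+1}=f^s$ follows. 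Some computation of this kind, converting a hypothesis about $\alpha a^{\pm}$ into a statement about $\alpha a^{+}$ without assuming (1), must be inserted before your transfer argument can run; as written, the implication $(3)\Rightarrow(1)$ is unproved.
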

\begin{proof} Let  ${\beta}=(1+ba^{+})^{-1}$ and $f_0={\alpha}a^{+}$. By Lemma \ref{a2}, it follows that \begin{eqnarray*}
		&&f_0f={\alpha}a^{+}(a+b)={\alpha}a^{+}a+{\alpha}a^{+}b
		={\alpha}a^{+}a+{\alpha}a^{+}aa^{+}b={\alpha}a^{+}a{\alpha}^{-1},\\
		&&1-f_0f={\alpha}(1-a^{+}a){\alpha}^{-1}={\alpha}(1-a^{+}a).
	\end{eqnarray*}
	
	(1)$\Rightarrow$(2): Since $(1-aa^{+})baa^{+}=0$, we have $(1-aa^{+})(a+b)aa^{+}=0$ by Lemmas \ref{a2} and \ref{1.1}. By an inductive argument, we conclude that $(1-aa^{+})(a+b)^saa^{+}=0$. Therefore, $(1-aa^{+})(a+b)^s=0$. That is, $aa^{+} f^s=f^s$. Then,
	\begin{eqnarray*}
		(1-f_0f)f^{s}&=&{\alpha}(1-a^{+}a)f^s={\alpha}(1-a^{+}a)aa^{+}f^s\\
		&=&{\alpha}(aa^{+}-a^2(a^{+})^2) f^s=0.
	\end{eqnarray*}
	Also, from Lemma \ref{pD1.2}, we get $ff_0^2=f_0$. So, $f$ has a minimal weak Drazin inverse $f_0$ with ${\rm i}(f)\leqslant s$.
	
	(2)$\Rightarrow$(3): Suppose $a^{\pm}\in T_l(a)$. By Lemma \ref{1.1}, we get
	\begin{eqnarray*}
		f({\alpha}a^{\pm})^2&=&f{\alpha}a^{\pm}{\alpha}a^{+}aa^{\pm}=f{\alpha}a^{\pm}a^{+}{\beta}aa^{\pm}\\
		&=&f{\alpha}(a^{+})^2{\beta}aa^{\pm}=f{\alpha}a^{+}{\alpha}a^{+}aa^{\pm}\\
		&=&ff_0^2aa^{\pm}=f_0aa^{\pm}={\alpha}a^{\pm}
	\end{eqnarray*}
	and
	\begin{eqnarray*}
		{\alpha}a^{\pm} f^{s+1}&=&{\alpha}a^{\pm} f_0f^{s+2}={\alpha}a^{\pm}{\alpha}a^{+} f^{s+2}\\
		&=&{\alpha}a^{\pm}a^{+}{\beta} f^{s+2}={\alpha}(a^{+})^2{\beta} f^{s+2}\\
		&=&f_0^2f^{s+2}=f^s.
	\end{eqnarray*}
	That is, $a^{\pm}\in T_l(f)$.

	(3)$\Rightarrow$(2): It follows, from Lemma \ref{1.1}, that
	\begin{eqnarray*}
		ff^2_0&=&f{\alpha}a^{+}a^{+}{\beta}=f{\alpha}a^{\pm}a^{+}{\beta}\\
		&=&f{\alpha}a^{\pm}{\alpha}a^{+}=f{\alpha}a^{\pm}{\alpha}a^{\pm}aa^{+}\\
		&=&{\alpha}a^{\pm}aa^{+}={\alpha}a^{+}=f_0.
	\end{eqnarray*}
	This implies  $(1-aa^{+})baa^{+}=0$ by Lemma \ref{pD1.2}. Then, by Lemma \ref{2.3}, we have that  $1+a^{\pm}b$ is invertible and ${\alpha}a^{\pm}=(1+a^{\pm}b)^{-1}a^{\pm}$. Hence, from Lemma \ref{1.1}, we get
	\begin{eqnarray*}
		f_0f^{s+1}&=&f_0{\alpha}a^{\pm}f^{s+2}=f_0(1+a^{\pm}b)^{-1}a^{\pm}f^{s+2}\\
		&=&{\alpha}a^{+}a^{\pm}(1+ba^{\pm})^{-1}f^{s+2}\\
		&=&{\alpha}(a^{\pm})^2(1+ba^{\pm})^{-1}f^{s+2}\\
		&=&({\alpha}a^{\pm})^2f^{s+2}=f^s.
	\end{eqnarray*}
	
	(2)$\Rightarrow$(1): Since $ff_0^2=f_0$, we find that $(1-aa^{+})baa^{+}=0$ by Lemma \ref{pD1.2}, which implies $(1-aa^{+})(a+b)^saa^{+}=0$ by an inductive argument. Noting that  $f_0f^{s+1}=f^s$, we have $(1-f_0f)f^{s}=0$. That is, ${\alpha}(1-a^{+}a)f^s=0$. Therefore,
	\begin{eqnarray*}
		&&(1-aa^{+})(a+b)^s(1-aa^{+})\\
		&=&(1-aa^{+})(a+b)^s-(1-aa^{+})(a+b)^saa^{+}\\
		&=&[(1-aa^{+})(1-a^{+}a)](a+b)^s-(1-aa^{+})(a+b)^saa^{+}=0.
	\end{eqnarray*}
	
	In this case, from $(3)$, we know that ${\alpha} T_l(a)=\{(1+a^{+}b)^{-1}y:~y\in T_l(a)\}\subseteq T_l(f)$.  Now, we claim that $T_l(f)\subseteq {\alpha} T_l(a)$. According to Lemma \ref{pD1.2}, we have $ff_0=aa^{+}$, which implies $E_l(f)=E_l(a)$. For any $x\in T_l(f)$,  we get $a^D fx\in T_l(a)$ by Lemma \ref{pD1.5}. This together with ${\alpha}a^D \in T_l(f)$ implies that $x=({\alpha}a^D) fx\in {\alpha} T_l(a)$ by Lemma \ref{1.1}. Hence, $T_l(f)\subseteq {\alpha} T_l(a)$.
\end{proof}

By above theorem, we can recover \cite[Theorem 4.1]{Mosicjmaa2024}.

\begin{corollary} \emph{\cite{Mosicjmaa2024}} Let $A,B\in \mathbb{C}^{n\times n}$, $X\in T_l(A)$ and $F=A+B$. If $\mathcal{R}(B)\subseteq \mathcal{R}(A^k)$ and $\mathcal{R}(B^*)\subseteq \mathcal{R}((XA)^*)$, then $X(I+BX)^{-1}\in T_l(F)$.
\end{corollary}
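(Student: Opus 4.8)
The plan is to obtain the corollary as a specialization of Theorem \ref{pD2.7}, under the dictionary $a=A$, $b=B$, $a^{+}=X$, $f=F$ and $k={\rm i}(A)$. First I would note that the expression $X(I+BX)^{-1}$ already presupposes that $I+BX=1+ba^{+}$ is invertible; by Jacobson's lemma this is equivalent to the invertibility of $1+a^{+}b=I+XB$, so the standing hypothesis of Theorem \ref{pD2.7} holds. Writing ${\alpha}=(1+a^{+}b)^{-1}$ and ${\beta}=(1+ba^{+})^{-1}$ and arguing as in the proof of Lemma \ref{pD1.2} (where ${\alpha}a^{+}=a^{+}{\beta}$), one gets
\[
X(I+BX)^{-1}=a^{+}(1+ba^{+})^{-1}=a^{+}{\beta}={\alpha}a^{+}=(1+a^{+}b)^{-1}a^{+}.
\]
Hence the target $X(I+BX)^{-1}\in T_l(F)$ is exactly statement (2) of Theorem \ref{pD2.7} for a suitable $s$, and everything reduces to checking condition (1) of that theorem.

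Next I would translate the two range inclusions into the multiplicative identities appearing in Theorem \ref{pD2.7}(1). Since $AA^{+}\in E_l(A)$, the idempotent $AA^{+}$ has $\mathcal{R}(AA^{+})=\mathcal{R}(AA^{D})=\mathcal{R}(A^{k})$; because a subspace lies in the range of an idempotent precisely when that idempotent fixes it, $\mathcal{R}(B)\subseteq\mathcal{R}(A^{k})$ is equivalent to $(I-AA^{+})B=0$, i.e. $(1-aa^{+})b=0$. Likewise $XA=A^{+}A$ is idempotent with $\mathcal{N}(A^{+}A)=\mathcal{R}(I-A^{+}A)$, and, passing to orthogonal complements, $\mathcal{R}(B^{*})\subseteq\mathcal{R}((XA)^{*})$ is equivalent to $\mathcal{N}(XA)\subseteq\mathcal{N}(B)$, that is to $B(I-A^{+}A)=0$, i.e. $b(1-a^{+}a)=0$. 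The first half of Theorem \ref{pD2.7}(1), namely $(1-aa^{+})baa^{+}=0$, is then immediate from $(1-aa^{+})b=0$.

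For the second half I would take $s=k$ and establish the stronger identity $(I-AA^{+})F^{k}=0$, i.e. $\mathcal{R}(F^{k})\subseteq\mathcal{R}(A^{k})$. The inclusion $\mathcal{R}(B)\subseteq\mathcal{R}(A^{k})$ makes $\mathcal{R}(A^{k})$ invariant under $B$ and shows that $B$ induces the zero map $\overline{B}$ on the quotient $\mathbb{C}^{n}/\mathcal{R}(A^{k})$; since $A^{k}(\mathbb{C}^{n})=\mathcal{R}(A^{k})$, the map $\overline{A}$ induced by $A$ satisfies $\overline{A}^{\,k}=0$, so the map induced by $F=A+B$ equals $\overline{A}$ and also satisfies $\overline{F}^{\,k}=0$; equivalently $\mathcal{R}(F^{k})\subseteq\mathcal{R}(A^{k})$. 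This yields $(1-aa^{+})f^{k}=0$, and a fortiori $(1-aa^{+})(a+b)^{k}(1-aa^{+})=0$. With both parts of Theorem \ref{pD2.7}(1) verified for $s=k$ and $1+a^{+}b$ invertible, statement (2) of that theorem gives $(1+a^{+}b)^{-1}a^{+}\in T_l(F)$, which by the first paragraph is precisely $X(I+BX)^{-1}\in T_l(F)$.

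The step I expect to be the main obstacle is the faithful translation of the two range conditions into the left/right identities $(1-aa^{+})b=0$ and $b(1-a^{+}a)=0$: one must work with the genuine idempotents $AA^{+}$ and $A^{+}A$ and their ranges and kernels, and resist replacing $AA^{+}$ by the Drazin projector $AA^{D}$. A second point to flag explicitly is that the invertibility of $I+BX$ is only an implicit hypothesis carried by the notation $X(I+BX)^{-1}$; the range conditions do not force it (for instance $F=0$ can occur), so it must be stated before invoking Theorem \ref{pD2.7}. It is worth remarking that the computation shows the single inclusion $\mathcal{R}(B)\subseteq\mathcal{R}(A^{k})$ already delivers condition (1), so Theorem \ref{pD2.7} in fact recovers Mosi\'{c}'s result under weaker hypotheses.
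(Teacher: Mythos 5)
Your proof is correct and follows exactly the route the paper intends: the corollary is stated there as an immediate specialization of Theorem \ref{pD2.7} (with $a=A$, $b=B$, $a^{+}=X$, $s=k={\rm i}(A)$), and your work — identifying $X(I+BX)^{-1}$ with $(1+a^{+}b)^{-1}a^{+}$, translating the two range inclusions into $(1-aa^{+})b=0$ and $b(1-a^{+}a)=0$ via the idempotents $AA^{+}$ and $XA$, and the quotient argument giving $(1-aa^{+})f^{k}=0$ — supplies precisely the verification of condition (1) that the paper leaves implicit. Your side remarks are also accurate: the invertibility of $I+BX$ is indeed an implicit hypothesis carried by the notation (both range conditions can hold with $I+BX$ singular, e.g.\ $B=-A$ with $A$ invertible), and the single inclusion $\mathcal{R}(B)\subseteq\mathcal{R}(A^{k})$ already forces condition (1) of Theorem \ref{pD2.7}.
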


The next result connects additive properties and absorption laws for minimal weak Drazin inverses.

\begin{theorem}\label{pD3.8} If $f$ has a minimal weak Drazin inverse $f^{+}$, then the following statements are equivalent:
	\begin{itemize}
		\item[\rm(1)] $a^{+}(a+f)f^{+}=a^{+}+f^{+}$;
		\item[\rm(2)] $1+a^{+}b$ is invertible and $f^{+}=(1+a^{+}b)^{-1}a^{+}$.
	\end{itemize}
\end{theorem}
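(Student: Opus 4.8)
The plan is to prove the two implications separately; $(2)\Rightarrow(1)$ is a short verification, while $(1)\Rightarrow(2)$ carries the difficulty. For $(2)\Rightarrow(1)$, assume $1+a^{+}b$ is invertible and $f^{+}=(1+a^{+}b)^{-1}a^{+}$, and set $\alpha=(1+a^{+}b)^{-1}$, $\beta=(1+ba^{+})^{-1}$. Since $f^{+}=\alpha a^{+}$ is, by hypothesis, a minimal weak Drazin inverse of $f$, the relation $f(f^{+})^{2}=f^{+}$ holds, so condition (1) of Lemma~\ref{pD1.2} is satisfied and hence $ff^{+}=aa^{+}$. Using $f^{+}=\alpha a^{+}=a^{+}\beta$ (as in the proof of Lemma~\ref{pD1.2}) together with $a^{+}aa^{+}=a^{+}$ from Lemma~\ref{a2}, I would compute $a^{+}ff^{+}=a^{+}aa^{+}=a^{+}$ and $a^{+}af^{+}=a^{+}aa^{+}\beta=a^{+}\beta=f^{+}$; adding these gives $a^{+}(a+f)f^{+}=a^{+}af^{+}+a^{+}ff^{+}=f^{+}+a^{+}$, which is $(1)$.

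For $(1)\Rightarrow(2)$, I would first squeeze idempotent identities out of the absorption law $a^{+}af^{+}+a^{+}ff^{+}=a^{+}+f^{+}$. Right-multiplying by the idempotent $ff^{+}$ and using $f^{+}ff^{+}=f^{+}$ yields $a^{+}=a^{+}ff^{+}$; left-multiplying by the idempotent $aa^{+}$ and using $aa^{+}a^{+}=a^{+}$ yields $f^{+}=aa^{+}f^{+}$. Feeding $a^{+}=a^{+}ff^{+}$ back into the absorption law collapses it to $a^{+}af^{+}=f^{+}$. These three relations immediately give $(1+a^{+}b)f^{+}=f^{+}+a^{+}(f-a)f^{+}=f^{+}+a^{+}ff^{+}-a^{+}af^{+}=a^{+}$, so that as soon as $1+a^{+}b$ is invertible the formula $f^{+}=(1+a^{+}b)^{-1}a^{+}$ follows.

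The main obstacle is exactly the invertibility of $1+a^{+}b$, and I expect this to be the only place where the full force of ``$f^{+}$ is a minimal weak Drazin inverse'' is needed (so far only idempotency of $ff^{+}$ has been used, not $f^{+}f^{{\rm i}(f)+1}=f^{{\rm i}(f)}$). My plan is a corner reduction relative to the idempotent $p=aa^{+}$. Because $aa^{+}a^{+}=a^{+}$ gives $a^{+}b\in pR$, the element $1+a^{+}b$ is block upper triangular for $p$, with diagonal blocks $u=a^{+}faa^{+}$ in $pRp$ and $1-p$; thus $1+a^{+}b$ is invertible iff $u$ is invertible in $pRp$. A right inverse of $u$ is cheap: using $aa^{+}f^{+}=f^{+}$, $a^{+}ff^{+}=a^{+}$ and Lemma~\ref{1.1} one checks $u\,(aa^{+}f^{+}a^{2}a^{+})=p$. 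The delicate point is a left inverse, which I would reduce to the single identity $\varepsilon:=f^{+}aa^{+}faa^{+}=aa^{+}$.

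To prove this identity I would set $v=aa^{+}faa^{+}$, note $vf^{+}=aa^{+}$ so that $\varepsilon=f^{+}v$ and $p-\varepsilon$ are idempotents of $pRp$, and then play two evaluations of $(p-\varepsilon)ff^{+}$ against each other: writing $ff^{+}=f^{k}(f^{+})^{k}$ with $k={\rm i}(f)$ (Lemma~\ref{a2}) and using $aa^{+}f^{k}=f^{k}$ (from $f^{+}=aa^{+}f^{+}$ and $f^{+}f^{k+1}=f^{k}$) one gets $\varepsilon f^{k}=f^{+}f^{k+1}=f^{k}$, whence $(p-\varepsilon)ff^{+}=(p-\varepsilon)f^{k}(f^{+})^{k}=0$; on the other hand $p\,ff^{+}=aa^{+}ff^{+}=p$ and $\varepsilon\,ff^{+}=\varepsilon$ give $(p-\varepsilon)ff^{+}=p-\varepsilon$, so $p-\varepsilon=0$. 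With $\varepsilon=p$ secured, $f^{+}a^{2}a^{+}$ is a left inverse of $u$ (indeed $f^{+}a^{2}a^{+}\cdot a^{+}faa^{+}=f^{+}aa^{+}faa^{+}=\varepsilon=p$), so $u$, and hence $1+a^{+}b$, is invertible, completing $(1)\Rightarrow(2)$. I would finally remark that $\varepsilon=p$ also recovers $ff^{+}=aa^{+}$, reconnecting the argument with Lemma~\ref{pD1.2} and Theorem~\ref{pD2.7}.
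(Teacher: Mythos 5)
Your proposal is correct, and in the crucial direction $(1)\Rightarrow(2)$ it takes a genuinely different route from the paper. The paper's proof hinges entirely on the cited equivalence \cite[Theorem 4.10]{LiWenDe2023} (the absorption law $a^{+}(a+f)f^{+}=a^{+}+f^{+}$ holds iff $ff^{+}=aa^{+}$): from $ff^{+}=aa^{+}$ it computes $a^{+}bf^{+}=a^{+}-f^{+}$ and writes down the inverse in closed form, $(1+a^{+}b)^{-1}=1-f^{+}b$, whence $f^{+}=(1-f^{+}b)a^{+}$; the converse is dispatched by Lemma \ref{pD1.2} and Theorem \ref{pD2.7}. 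You never invoke that external characterization. Instead you harvest $a^{+}ff^{+}=a^{+}$, $aa^{+}f^{+}=f^{+}$ and $a^{+}af^{+}=f^{+}$ by multiplying the absorption law by the idempotents $ff^{+}$ and $aa^{+}$ (all three computations are valid, using $a(a^{+})^{2}=a^{+}$ and $f^{+}ff^{+}=f^{+}$), which yields $(1+a^{+}b)f^{+}=a^{+}$ directly; you then obtain invertibility of $1+a^{+}b$ structurally, via Peirce reduction at $p=aa^{+}$: since $(1-p)a^{+}=0$, the element $1+a^{+}b$ is triangular with respect to $p$, so its invertibility reduces to that of the corner $u=p(1+a^{+}b)p=a^{+}faa^{+}$ in $pRp$; your right inverse $aa^{+}f^{+}a^{2}a^{+}$ and left inverse $f^{+}a^{2}a^{+}$ both check out, the latter resting on the identity $\varepsilon=f^{+}aa^{+}faa^{+}=p$, which you prove by the clean double evaluation $(p-\varepsilon)ff^{+}=(p-\varepsilon)f^{k}(f^{+})^{k}=0$ versus $(p-\varepsilon)ff^{+}=p-\varepsilon$ (using $f^{+}f^{k+1}=f^{k}$ for $k={\rm i}(f)$ and Lemmas \ref{a2} and \ref{1.1}). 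The trade-off: the paper's argument is far shorter and produces the explicit formula $(1+a^{+}b)^{-1}=1-f^{+}b$, but it outsources the core difficulty to \cite{LiWenDe2023}; yours is longer but self-contained within the lemmas of Section 2, and it exhibits the invertibility of $1+a^{+}b$ as a purely structural (corner-ring) consequence of the absorption law, re-deriving $ff^{+}=aa^{+}$ along the way rather than assuming it. Your $(2)\Rightarrow(1)$ is essentially the paper's --- Lemma \ref{pD1.2} applied to $f_{0}=(1+a^{+}b)^{-1}a^{+}$ gives $ff^{+}=aa^{+}$ --- finished by a two-line direct verification instead of a second appeal to the cited equivalence.
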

\begin{proof}From \cite[Theorem 4.10]{LiWenDe2023}, we get that $a^{+}(a+f)f^{+}=a^{+}+f^{+}$ if and only if $ff^{+}=aa^{+}$.
	
	(1)$\Rightarrow$(2): Since $ff^{+}=aa^{+}$, it follows, from Lemmas \ref{a2} and \ref{1.1}, that
	\begin{eqnarray*}
		a^{+}bf^{+}&=&a^{+}ff^{+}-a^{+}af^{+}
		=a^{+}aa^{+}-a^{+}af^{+}\\
		&=&a^{+}-a^{+}af^{+}
		=a^{+}-a^{+}a(ff^{+})f^{+}\\
		&=&a^{+}-a^{+}a(aa^{+})f^{+}=a^{+}-a^2(a^{+})^2f^{+}\\
		&=&a^{+}-(aa^{+})f^{+}=a^{+}-(ff^{+})f^{+}\\
		&=&a^{+}-f^{+}.
	\end{eqnarray*}
	Thus,
	\begin{eqnarray*}
		&&(1+a^{+}b)(1-f^{+}b)\\
		&=&1+a^{+}b-f^{+}b-a^{+}bf^{+}b\\
		&=&1+a^{+}b-f^{+}b-(a^{+}-f^{+})b=1.
	\end{eqnarray*}
	Similarly, $(1-f^{+}b)(1+a^{+}b)=1$, hence $1+a^{+}b$ is invertible and $(1+a^{+}b)^{-1}=1-f^{+}b$. Then, $(1+a^{+}b)^{-1}a^{+}=(1-f^{+}b)a^{+}=a^{+}-f^{+}ba^{+}=a^{+}-(a^{+}-f^{+})=f^{+}.$
	
	(2)$\Rightarrow$(1): It follows, from Lemma \ref{pD1.2} and Theorem \ref{pD2.7}, that $ff^{+}=aa^{+}$. Therefore, $a^{+}(a+f)f^{+}=a^{+}+f^{+}$.
\end{proof}

\begin{remark} A well-known fact is that if $a$ has a outer inverse $x$ (i.e. $xax=x$) and $1+xb$ is invertible, then $(1+xb)^{-1}x$ is a outer inverse of $a+b$. Noting that a minimal weak Drazin inverse is also a outer inverse, we present a more general result  as follows.

If $a$ and $f$ have outer inverses $x$ and $y$ (i.e. $xax=x$, $yfy=y$), respectively, then the following statements are equivalent:
	\begin{itemize}
		\item[\rm(1)] $x(a+f)y=x+y=y(a+f)x$;
		\item[\rm(2)] $xR=yR$ and $Rx=Ry$;
        \item[\rm(3)] $1+xb$ is invertible and $y=(1+xb)^{-1}x$.
	\end{itemize}

 (1)$\Leftrightarrow$(2) follows from \cite[Theorem 8]{Bapat2017}. (2)$\Leftrightarrow$(3) is easy to verify. Herein, $(1+xb)^{-1}=1-yb$. Furthermore, we give a counterexample of the fact that $x(a+f)y=x+y$ is not equivalent to $x+y=y(a+f)x$. Let $x,~y\in R$ with $xy=1$ and $yx\neq 1$. Then $xyx=x$, $yxy=y$, $x(x+y)y=x+y$ and $y(x+y)x\neq x+y$.

Without the condition that $1+xb$ is invertible, we can get that $(1+xb)y=x$ is equivalent to $x(a+f)y=x+y$, which can not imply $x+y=y(a+f)x$.
\end{remark}

\begin{remark}\label{pD2.12} From Theorem \ref{pD2.7}, we conclude that the following statements are equivalent:
	\begin{itemize}
		\item[\rm(1)] $1+a^{+}b$ is invertible, $f$ has a minimal weak  Drazin inverse $(1+a^{+}b)^{-1}a^{+}$;
		\item[\rm(2)] $1+a^{\pm}b$ is invertible, $f$ has a minimal weak Drazin inverse $(1+a^{\pm}b)^{-1}a^{\pm}$.
	\end{itemize}
\end{remark}

Analogous to the notation $T_l(a)$, we can denote $$T_r(a)=\{z\in R: ~a^{{\rm i}(a)+1}z=a^{{\rm i}(a)},~z^{2}a=z \}.$$
Dually, we can obtain additive properties for $T_r(a)$, which are omitted. By Lemma \ref{m1}, we give an interesting fact that
$$T_l(a)\cap T_r(a)=\{a^D\}.$$ This fact inspires us to give necessary and sufficient conditions of $f^D=(1+a^Db)^{-1}a^D$.



\begin{theorem}\label{pD2.10}  If $1+a^Db$ is invertible, then the following statements are equivalent:
	\begin{itemize}
		\item[\rm(1)] $(1-aa^D)baa^D=0$ and $(a+b)^s(1-aa^D)=0$;
		\item[\rm(2)] $a a^Db(1-a a^D)=0$ and $(1-a a^D)(a+b)^s=0$;
		\item[\rm(3)] $aa^Db=baa^D$ and $(1-aa^D)(a+b)^s=0$;
		\item[\rm(4)] $f\in R^D$ and $f^D=(1+a^Db)^{-1}a^D$ with ${\rm i}(f)\leqslant s$.
	\end{itemize}
	In this case, $$T_l(f)={\alpha} T_l(a),~~T_r(f)= T_r(a){\beta},$$
	where ${\alpha}=(1+a^Db)^{-1}$ and ${\beta}=(1+ba^D)^{-1}$.
\end{theorem}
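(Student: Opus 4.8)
The plan is to exploit that $a^{D}$ is at once a left and a right minimal weak Drazin inverse of $a$, so that Theorem~\ref{pD2.7} (applied with $a^{+}=a^{D}$) and its right-hand dual are both available, and to combine them through the fact $T_l(f)\cap T_r(f)=\{f^{D}\}$ (Lemma~\ref{m1}). Throughout write $p=aa^{D}=a^{D}a$, an idempotent commuting with $a$, and $f_{0}=(1+a^{D}b)^{-1}a^{D}$. Since $(1+a^{D}b)a^{D}=a^{D}(1+ba^{D})$ one has $f_{0}=\alpha a^{D}=a^{D}\beta$, so the single element $f_{0}$ is the common candidate produced by the left and the right theories.

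I would first settle the routine implications. Under (3) the identity $aa^{D}b=baa^{D}$ says that $b$, hence $f$, commutes with $p$; then $(1-p)bp=0$ and $f^{s}(1-p)=(1-p)f^{s}=0$ are immediate, which gives both (1) and (2). For (3)$\Rightarrow$(4) I would check the hypotheses of Theorem~\ref{pD2.7} with $a^{+}=a^{D}$, namely $(1-p)bp=0$ and $(1-p)f^{s}(1-p)=0$, together with the mirror hypotheses $pb(1-p)=0$, $(1-p)f^{s}(1-p)=0$ of its right-hand dual (note $1+ba^{D}$ is invertible by Jacobson, as in Lemma~\ref{2.3}); all hold under (3), so $f_{0}=\alpha a^{D}\in T_l(f)$ and $f_{0}=a^{D}\beta\in T_r(f)$ with ${\rm i}(f)\leqslant s$, and $T_l(f)\cap T_r(f)=\{f^{D}\}$ forces $f_{0}=f^{D}$. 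Conversely (4)$\Rightarrow$(3): from $f^{D}=f_{0}$ and Lemma~\ref{pD1.2} one gets $ff_{0}=p$, so $p=ff^{D}=f^{D}f$ commutes with $f$; this yields $aa^{D}b=baa^{D}$, while ${\rm i}(f)\leqslant s$ gives $(1-p)f^{s}=0$.

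The crux is (1)$\Rightarrow$(3), and by symmetry (2)$\Rightarrow$(3). Here I would pass to the Peirce decomposition at $p$. The hypothesis $(1-p)bp=0$ gives $(1-p)fp=0$, so $f$ is upper triangular for $p$, and the stronger hypothesis $f^{s}(1-p)=0$ splits into $(1-p)f^{s}(1-p)=m^{s}=0$ and $pf^{s}(1-p)=0$, where $m=(1-p)f(1-p)$ and $u=pfp$ is invertible in $pRp$. Invertibility of $u$ is unconditional here: since $(1-p)a^{D}=0$, the element $1+a^{D}b$ is $p$-upper triangular with $(1,1)$-block $p+a^{D}bp$, so its invertibility makes $p+a^{D}bp$ invertible in $pRp$, and $u=pfp=(pap)(p+a^{D}bp)=(a^{2}a^{D})(p+a^{D}bp)$ is then a product of units of $pRp$. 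Writing $g=pb(1-p)=pf(1-p)$ and $c_{k}=pf^{k}(1-p)$, the triangular shape yields $c_{k}=u\,c_{k-1}+g\,m^{k-1}$ with $c_{0}=0$, hence $c_{s}=\sum_{i=0}^{s-1}u^{i}g\,m^{\,s-1-i}=0$. Multiplying on the left by $u^{-(s-1)}$ turns this into $\sum_{l=0}^{s-1}\Phi^{l}(g)=0$ for the additive map $\Phi(x)=u^{-1}xm$ of $pR(1-p)$; since $m^{s}=0$ we have $\Phi^{s}=0$, so $\sum_{l=0}^{s-1}\Phi^{l}=(\mathrm{id}-\Phi)^{-1}$ is invertible and $g=0$, i.e. $pb(1-p)=0$. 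Combined with $(1-p)bp=0$ this gives $aa^{D}b=baa^{D}$, and $(1-p)f^{s}=0$ follows, so (3) holds; the implication (2)$\Rightarrow$(3) is obtained by the symmetric right-hand argument. I expect this nilpotency/Neumann-series step to be the only non-routine part of the proof.

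Finally, the two concluding equalities $T_l(f)=\alpha T_l(a)$ and $T_r(f)=T_r(a)\beta$ are precisely the \emph{in this case} clauses of Theorem~\ref{pD2.7} and of its right-hand dual, which now apply because their hypotheses have been verified.
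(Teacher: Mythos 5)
Your proposal is correct, and it reaches the theorem by a genuinely different route than the paper. The paper proves $(1)\Rightarrow(4)$ directly: Theorem \ref{pD2.7} gives $f_0=(1+a^Db)^{-1}a^D\in T_l(f)$ with ${\rm i}(f)\leqslant s$, Lemma \ref{pD1.2} gives $ff_0=aa^D$, so the hypothesis $(a+b)^s(1-aa^D)=0$ becomes $f^{s+1}f_0=f^s$; since also $f_0f^{s+1}=f^s$, the power $f^{s+1}$ commutes with $f_0$, and Lemma \ref{m1} immediately yields $f_0=f^D$. Then $(4)\Rightarrow(3)$ is obtained (much as in your argument) by playing $T_l(f)$ against $T_r(f)$, $(3)\Rightarrow(1)$ is clear, and $(2)\Leftrightarrow(4)$ is dual. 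You instead make $(1)\Rightarrow(3)$ the crux and prove it by a Peirce decomposition at $p=aa^D$ together with a nilpotency/Neumann-series argument ($\Phi(x)=u^{-1}xm$, $\Phi^s=0$, so $\sum_{l=0}^{s-1}\Phi^l$ is invertible and $g=pb(1-p)=0$), and only then pass to $(4)$ via the left and right versions of Theorem \ref{pD2.7} and the fact $T_l(f)\cap T_r(f)=\{f^D\}$. Your route is longer and more computational, but it is also more self-contained and explanatory: it exhibits structurally why the one-sided condition $f^s(1-p)=0$ forces the two-sided commutation $pb=bp$, which is exactly the work that Lemma \ref{m1} absorbs invisibly in the paper's two-line argument. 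I verified your recursion $c_k=uc_{k-1}+gm^{k-1}$, the identity $(1-p)f^k=m^k$, and the dual argument for $(2)\Rightarrow(3)$; they are sound.

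One step needs a supplement. The inference ``$1+a^Db$ is $p$-upper triangular, so its invertibility makes $p+a^Dbp$ invertible in $pRp$'' is not valid in a general ring: the bilateral shift $z$ on $\ell^2(\mathbb{Z})$ is invertible and satisfies $(1-p)zp=0$ for $p$ the projection onto ${\rm span}\{e_n:n\geqslant 0\}$, yet $pzp$ is the unilateral shift, which is not invertible in $pB(\ell^2)p$. What saves you is that the premise you cite, $(1-p)a^D=0$, gives more than triangularity: the $(2,2)$-block of $1+a^Db$ is exactly $1-p$. Writing $w=(1+a^Db)^{-1}$, expanding $(1-p)(1+a^Db)wp=0$ gives $(1-p)wp=0$, after which $pwp$ is a two-sided inverse of $p+a^Dbp$ in the corner ring $pRp$ (alternatively, note $1+a^Dbp=(p+a^Dbp)+(1-p)$ is block diagonal and invertible by two applications of Jacobson's lemma). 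With this one-line repair, the invertibility of $u=pfp=(pap)(p+a^Dbp)$ in $pRp$, and hence your whole argument, goes through.
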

\begin{proof}
	(1)$\Rightarrow$(4): It follows, from Theorem \ref{pD2.7}, that $f\in R^D$ and $(1+a^Db)^{-1}a^D\in T_l(f)$ with ${\rm i}(f)\leqslant s$. By Lemma \ref{pD1.2}, we have $f(1+a^Db)^{-1}a^D=aa^D$. So, $(a+b)^s(1-f(1+a^Db)^{-1}a^D)=0$. That is, $f^{s+1}(1+a^Db)^{-1}a^D=f^s$. By Lemma \ref{m1}, we can immediately get $f^D=(1+a^Db)^{-1}a^D$.

	
	(4)$\Rightarrow$(3): According to Theorem \ref{pD2.7} and $f^D=(1+a^Db)^{-1}a^D\in T_l(f),$ we conclude that $aa^Db=aa^Dbaa^D$. Similarly, since $f^D=(1+a^Db)^{-1}a^D=a^D(1+ba^D)^{-1}\in T_r(f)$,  we have  $b aa^D=aa^Dbaa^D$.  That is, $aa^Db=baa^D$. This together with $(1-aa^D)(a+b)^s(1-aa^D)=0$ implies $(1-aa^D)(a+b)^s=0$.
	
	(3)$\Rightarrow$(1): It is clear.
	
	(2)$\Leftrightarrow$(4): It is similar to that of (1)$\Leftrightarrow$(4).
\end{proof}

\begin{theorem}\label{pD3.18} If  $f\in  R^D$ with ${\rm i}(f)= k$, then the following statements are equivalent:
	\begin{itemize}
		\item[\rm(1)] $a^{D}(a+f)f^{D}=a^D+f^{D}$;
		\item[\rm(2)] $1+a^{D}b$ is invertible and $f^{D}=(1+a^{D}b)^{-1}a^{D}$;
        \item[\rm(3)] $1+a^{D}b$ is invertible, $aa^Db=baa^D$ and $(1-aa^D)(a+b)^k=0$;
        \item[\rm(4)] $1+a^{D}b$ is invertible, $(1-aa^D)baa^D=0$ and $(a+b)^k(1-aa^D)=0$;
        \item[\rm(4)] $1+a^{D}b$ is invertible, $aa^Db(1-a a^D)=0$ and $(1-a a^D)(a+b)^k=0$.
	\end{itemize}
\end{theorem}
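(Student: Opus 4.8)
The plan is to obtain Theorem \ref{pD3.18} as a direct specialization of the two results already proved in this section, Theorems \ref{pD3.8} and \ref{pD2.10}. The bridge is the elementary observation that the Drazin inverse is itself a minimal weak Drazin inverse: from $a^{D}a^{k+1}=a^{k}$, $a(a^{D})^{2}=a^{D}$ and $a^{D}a=aa^{D}$ one reads off $a^{D}\in T_{l}(a)$ (indeed $a^{D}\in T_{l}(a)\cap T_{r}(a)=\{a^{D}\}$), and the same computation gives $f^{D}\in T_{l}(f)$. Thus every statement proved for general elements of $T_{l}$ applies verbatim when the chosen minimal weak Drazin inverses are $a^{D}$ and $f^{D}$; in particular $f\in R^{D}$ guarantees that $f$ does possess a minimal weak Drazin inverse, as required to invoke Theorem \ref{pD3.8}.

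First I would establish (1)$\Leftrightarrow$(2) by applying Theorem \ref{pD3.8} with the particular choices $a^{+}=a^{D}$ and $f^{+}=f^{D}$. Both are legitimate by the observation above, so the theorem asserts precisely that $a^{D}(a+f)f^{D}=a^{D}+f^{D}$ holds if and only if $1+a^{D}b$ is invertible and $f^{D}=(1+a^{D}b)^{-1}a^{D}$; this is exactly (1)$\Leftrightarrow$(2), and no additional computation is needed.

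Next I would prove the chain (2)$\Leftrightarrow$(3)$\Leftrightarrow$(4)$\Leftrightarrow$(4$'$), writing (4$'$) for the final displayed item. Each of these four statements includes the hypothesis that $1+a^{D}b$ is invertible, which is exactly the standing assumption of Theorem \ref{pD2.10}, so that theorem is available throughout. Taking $s=k={\rm i}(f)$ there, I match the conditions term by term: statement (2) is condition (4) of Theorem \ref{pD2.10}, statement (3) is its condition (3), statement (4) is its condition (1), and statement (4$'$) is its condition (2). Since those four conditions are mutually equivalent whenever $1+a^{D}b$ is invertible, the desired equivalences follow at once.

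I expect no real obstacle here, as the argument merely transfers earlier results. The only points demanding care are bookkeeping: one must confirm that $a^{D}$ and $f^{D}$ genuinely lie in $T_{l}(a)$ and $T_{l}(f)$ so that Theorems \ref{pD3.8} and \ref{pD2.10} apply to them, and one must fix the index parameter as $s=k$ so that the exponents appearing in conditions (3), (4) and (4$'$) agree with those of Theorem \ref{pD2.10}. Since ${\rm i}(f)=k$ exactly, the index bound ${\rm i}(f)\leqslant s$ occurring in condition (4) of Theorem \ref{pD2.10} is met, which is what lets that condition be identified with statement (2) of the present theorem rather than with a formally weaker variant carrying a strict index inequality.
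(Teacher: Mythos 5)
Your proposal is correct and matches the paper's intended argument: the paper states Theorem \ref{pD3.18} without a separate proof precisely because it follows by specializing Theorem \ref{pD3.8} to $a^{+}=a^{D}$, $f^{+}=f^{D}$ (both legitimately in $T_l$) for (1)$\Leftrightarrow$(2), and Theorem \ref{pD2.10} with $s=k={\rm i}(f)$ for the remaining equivalences, exactly as you do. Your bookkeeping on the index condition ${\rm i}(f)\leqslant s$ and on membership of $a^{D}$, $f^{D}$ in $T_l(a)$, $T_l(f)$ is the right (and only) point of care, and it is handled correctly.
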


\begin{remark}   Theorem \ref{pD3.18} adds two equivalent conditions $(4)$ and $(5)$ to \cite[Theorem 2.1]{CaKW2000}, which characterizes absorption law for Drazin inverses. The absorption law and the additive property for generalized Drazin inverses were obtained in \cite{KPa}.
\end{remark}

From Theorem \ref{pD2.10}, we can recover \cite[Theorem 3.4]{CZWe} and \cite[Theorem 3.1]{Wei}  as follows.
\begin{corollary}\emph{\cite{CZWe}} If  $1+a^Db$ is invertible, $aa^Db=baa^D$ and $a(1-aa^D)b=ba(1-aa^D)$, then the following statements are equivalent:
	\begin{itemize}
		\item[\rm(1)] $f\in R^D$ and $f^D=(1+a^Db)^{-1}a^D$;
		\item[\rm(2)] there exists $m\in \mathbb{N}^+$ such that $aa^Db^m=b^m$.
	\end{itemize}
\end{corollary}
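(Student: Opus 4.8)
The plan is to deduce the corollary from Theorem~\ref{pD2.10}, reducing the stated equivalence to an elementary statement about commuting nilpotents. Write $p=aa^D$ and $q=1-aa^D$, so that $p,q$ are idempotents commuting with $a$ (a basic property of the Drazin inverse). First I would rewrite the three standing hypotheses in terms of $q$. The hypothesis $aa^Db=baa^D$ says precisely $pb=bp$, so $q$ commutes with $b$ as well; and the hypothesis $a(1-aa^D)b=ba(1-aa^D)$, after pushing $q$ through $a$ and $b$, becomes $qab=qba$. I would also record the standard fact that $qa=a-a^2a^D$ is nilpotent, since $(qa)^{{\rm i}(a)}=q\,a^{{\rm i}(a)}=a^{{\rm i}(a)}-a^{{\rm i}(a)+1}a^D=0$.

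Next I would invoke Theorem~\ref{pD2.10}, which applies because $1+a^Db$ is invertible. Statement~(1) of the corollary, namely that $f\in R^D$ with $f^D=(1+a^Db)^{-1}a^D$, is exactly condition~(4) of that theorem with $s={\rm i}(f)$ (the membership $f\in R^D$ supplying the requisite index), and is therefore equivalent to its condition~(3): the conjunction of $aa^Db=baa^D$ and $(1-aa^D)(a+b)^s=0$ for some $s$. Since $aa^Db=baa^D$ is already assumed, statement~(1) is equivalent to the existence of $s\in\mathbb{N}^+$ with $q(a+b)^s=0$. Thus the whole corollary reduces to proving
\[
\exists\,s\in\mathbb{N}^+:~q(a+b)^s=0 \quad\Longleftrightarrow\quad \exists\,m\in\mathbb{N}^+:~aa^Db^m=b^m .
\]

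For this reduced equivalence I would work with the two elements $qa$ and $qb$. Because $q$ is idempotent and commutes with both $a$ and $b$, one has $q(a+b)^s=(qa+qb)^s$ and $qb^m=(qb)^m$, and $aa^Db^m=b^m$ is the same as $qb^m=0$. The reformulated hypothesis $qab=qba$ gives $(qa)(qb)=qab=qba=(qb)(qa)$, so $qa$ and $qb$ commute, while $qa$ is nilpotent. Hence the right-hand side asserts that $qb$ is nilpotent, the left-hand side that $qa+qb$ is nilpotent, and I would finish with the elementary lemma that, for two commuting elements one of which is nilpotent, the sum is nilpotent iff the other summand is. The backward direction is the binomial theorem applied to the commuting nilpotents $qa$ and $qb$; the forward direction writes $qb=(qa+qb)-qa$ as a difference of commuting nilpotents.

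The main obstacle is bookkeeping rather than depth: correctly matching statement~(1) to Theorem~\ref{pD2.10} (observing that $f\in R^D$ furnishes the index $s={\rm i}(f)$ that the theorem needs) and carefully translating the hypothesis $a(1-aa^D)b=ba(1-aa^D)$ into the commutativity $(qa)(qb)=(qb)(qa)$ that powers the binomial argument. Once the problem is transported into the corner cut out by $q=1-aa^D$, the concluding lemma on commuting nilpotents is entirely routine.
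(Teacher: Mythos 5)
Your proposal is correct and takes essentially the same route as the paper: both reduce statement (1) via Theorem~\ref{pD2.10} to the existence of $s$ with $(1-aa^D)(a+b)^s=0$, introduce the commuting pair $h=(1-aa^D)a$, $g=(1-aa^D)b$ (your $qa$, $qb$) with $h$ nilpotent, and finish by the elementary fact that for commuting elements with one nilpotent, the sum is nilpotent iff the other summand is. The only cosmetic difference is in the forward direction, where the paper uses the telescoping factorization $(h+g)\sum_{i=0}^{k-1}(-h)^i g^{k-1-i}=g^k$ while you write $qb=(qa+qb)-qa$ and apply the binomial theorem to commuting nilpotents; these are the same argument.
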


\begin{proof} Suppose ${\rm i}(a)=k$. Take $h=(1-aa^D)a$ and $g=(1-aa^D)b$. It is clear that $h^k=a^k(1-aa^D)=0$. Since $aa^Db=baa^D$ and $a(1-aa^D)b=ba(1-aa^D)$, it follows $hg=gh$.
	
	(1)$\Rightarrow$(2): It  follows, from Theorem \ref{pD2.10}, that $(1-aa^D)(a+b)^l=0$ for some $l\in \mathbb{N}^+$. Then, $(h+g)^l=(1-aa^D)(a+b)^l=0$. From $(h+g)(\sum\limits_{i=0}^{k-1} (-h)^{i}g^{k-1-i})=g^k-(-h)^k=g^k$, we get that $0=(h+g)^l(\sum\limits_{i=0}^{k-1} (-h)^{i}g^{k-1-i})^l=g^{lk}$. That is, $(1-aa^D)b^{lk}=0.$
	
	(2)$\Rightarrow$(1): From Theorem \ref{pD2.10}, it suffices to prove $(1-aa^D)(a+b)^{l}=0$ for some $l\in \mathbb{N}^+$. Since $h^k=g^m=0$ and $hg=gh$, we get $(h+g)^{m+k}=0$, which implies $(1-aa^D)(a+b)^{m+k}=0$.
\end{proof}

\begin{corollary}\emph{\cite{Wei}} Let $A,~E\in \mathbb{C}^{n\times n}$ and $B=A+E$ with ${\rm i}(A)=k$. Then the following statements are equivalent:
	\begin{itemize}
		\item[\rm(1)] $B$ is group invertible and $B^{\#}=(I+A^DE)^{-1}A^D$;
		\item[\rm(2)] ${\rm rank}(B)={\rm rank}(A^k)$ and $AA^DE=EAA^D=A-A^2A^D+E$.
	\end{itemize}
\end{corollary}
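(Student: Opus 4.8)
The plan is to specialise Theorem~\ref{pD2.10} to the case $s=1$, exploiting the fact that $B$ is group invertible precisely when $f=B$ is Drazin invertible with ${\rm i}(f)\leqslant 1$, in which case $B^{\#}=B^{D}$. Under the dictionary $a=A$, $b=E$, $f=B$, condition~(1) of the corollary is then exactly condition~(4) of Theorem~\ref{pD2.10} with $s=1$. Hence the whole task reduces to two things: reconciling the algebraic part of condition~(2) with condition~(3) of Theorem~\ref{pD2.10} for $s=1$ (that is, with $AA^{D}E=EAA^{D}$ together with $(I-AA^{D})B=0$), and supplying the standing hypothesis that $I+A^{D}E$ be invertible on each side.

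First I would rewrite the matrix identity $AA^{D}E=EAA^{D}=A-A^{2}A^{D}+E$. Its left equality is literally $aa^{D}b=baa^{D}$. For the right equality, using $A-A^{2}A^{D}=A(I-AA^{D})=(I-AA^{D})A$ one obtains $(AA^{D}-I)E=(I-AA^{D})A$, i.e. $(I-AA^{D})(A+E)=(I-AA^{D})B=0$, which is $(1-aa^{D})(a+b)^{s}=0$ for $s=1$. Thus the algebraic part of condition~(2) coincides with condition~(3) of Theorem~\ref{pD2.10}. For the direction (1)$\Rightarrow$(2) the remaining points are then immediate: the invertibility of $I+A^{D}E$ is built into the expression $B^{\#}=(I+A^{D}E)^{-1}A^{D}$, so Theorem~\ref{pD2.10} applies and yields the two matrix equations, while ${\rm rank}(B)={\rm rank}(B^{\#})={\rm rank}\big((I+A^{D}E)^{-1}A^{D}\big)={\rm rank}(A^{D})={\rm rank}(A^{k})$ gives the rank equality.

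The substantive direction is (2)$\Rightarrow$(1), where I must manufacture the invertibility of $I+A^{D}E$ out of the rank hypothesis before Theorem~\ref{pD2.10} can be invoked. I would pass to the core--nilpotent decomposition $\mathbb{C}^{n}=\mathcal{R}(A^{k})\oplus N(A^{k})$, writing $A=A_{1}\oplus N$ with $A_{1}$ invertible and $N$ nilpotent, so that $A^{D}=A_{1}^{-1}\oplus 0$ and $AA^{D}=I\oplus 0$. The commutativity $AA^{D}E=EAA^{D}$ forces $E$ block diagonal, $E=E_{1}\oplus E_{4}$, and $(I-AA^{D})B=0$ then forces $E_{4}=-N$; hence $B=(A_{1}+E_{1})\oplus 0$ and $I+A^{D}E=A_{1}^{-1}(A_{1}+E_{1})\oplus I$. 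In this form ${\rm rank}(B)={\rm rank}(A_{1}+E_{1})$, while ${\rm rank}(A^{k})=\dim\mathcal{R}(A^{k})$ is exactly the size of the block $A_{1}+E_{1}$, so the rank equality is equivalent to $A_{1}+E_{1}$ being invertible, equivalently to $I+A^{D}E$ being invertible. With that in hand, Theorem~\ref{pD2.10}(3)$\Rightarrow$(4) for $s=1$ delivers $B\in R^{D}$ with ${\rm i}(B)\leqslant 1$, i.e. $B$ group invertible, and $B^{\#}=B^{D}=(I+A^{D}E)^{-1}A^{D}$. The main obstacle is precisely this equivalence between the rank condition and the invertibility of $I+A^{D}E$; once the block computation converts the rank equality into invertibility of the single diagonal block $A_{1}+E_{1}$, everything else is a direct translation of the earlier theorem.
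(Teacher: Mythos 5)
Your proposal is correct, and it runs along the same skeleton as the paper's proof but is strictly more complete. Both arguments reduce the corollary to Theorem \ref{pD2.10} with $s=1$: condition (1) of the corollary is condition (4) of that theorem, and the identity $AA^DE=EAA^D=A-A^2A^D+E$ is exactly condition (3), the right-hand equality being a rewriting of $(I-AA^D)(A+E)=0$, as you note. The paper's proof essentially stops there: it asserts the equivalence of (1) with these matrix identities via Theorem \ref{pD2.10} and then derives the rank equality from $BB^{\#}=AA^D$ (Lemma \ref{pD1.2}); your chain ${\rm rank}(B)={\rm rank}(B^{\#})={\rm rank}\big((I+A^DE)^{-1}A^D\big)={\rm rank}(A^D)={\rm rank}(A^k)$ is an equally valid substitute for that step. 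However, Theorem \ref{pD2.10} carries the standing hypothesis that $I+A^DE$ is invertible, and this is genuinely \emph{not} a consequence of the matrix identities alone: take $A=I$ and $E=-A$, so that $AA^DE=EAA^D=A-A^2A^D+E$ all equal $-A$, yet $I+A^DE=0$. Thus the paper's one-line ``if and only if'' silently skips the point you isolated as the substantive one: in the direction (2)$\Rightarrow$(1) the invertibility of $I+A^DE$ must be manufactured from the rank hypothesis before the theorem can be invoked. Your core--nilpotent block computation supplies precisely this missing step, and it is correct: $AA^DE=EAA^D$ forces $E=E_1\oplus E_4$, $(I-AA^D)B=0$ forces $E_4=-N$, hence $B=(A_1+E_1)\oplus 0$ and $I+A^DE=A_1^{-1}(A_1+E_1)\oplus I$, so ${\rm rank}(B)={\rm rank}(A^k)$ holds iff $A_1+E_1$ is invertible iff $I+A^DE$ is invertible. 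In short, what the paper's brevity buys is a two-line derivation (implicitly leaning on the cited source \cite{Wei} for the converse), while your version buys a self-contained proof that closes the gap.
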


\begin{proof} It follows, from Theorem \ref{pD2.10}, that $B$ is group invertible and $B^{\#}=(I+A^DE)^{-1}A^D$ if and only if $AA^DE=EAA^D=A-A^2A^D+E$. Then, by Lemma \ref{pD1.2}, we get $BB^{\#}=AA^D$. Therefore, ${\rm rank}(B)={\rm rank}(BB^{\#})={\rm rank}(AA^D)={\rm rank}(A^k)$.
\end{proof}

\section {  The case of pseudo core inverses}

Chen et al. \cite{CCWa} investigated the additive properties for pseudo core inverses. Herein, we present several new equivalent conditions. Throughout this section,  let $a\in R^{\scriptsize\textcircled{\tiny D}}$, $b\in R$, $s\in \mathbb{N}^+$, and take $f=a+b$.

\begin{theorem}\label{pD3.1} If $1+a^{\scriptsize\textcircled{\tiny D}}b$ is invertible, then the following statements are equivalent:
	\begin{itemize}
		\item[\rm(1)] $(1-aa^{\scriptsize\textcircled{\tiny D}})baa^{\scriptsize\textcircled{\tiny D}}=0$ and $(1-aa^{\scriptsize\textcircled{\tiny D}})(a+b)^s(1-aa^{\scriptsize\textcircled{\tiny D}})=0$;
		\item[\rm(2)] $f$ is pseudo core invertible and $f^{\scriptsize\textcircled{\tiny D}}=(1+a^{\scriptsize\textcircled{\tiny D}}b)^{-1}a^{\scriptsize\textcircled{\tiny D}}$ with ${\rm i}(f)\leqslant s$;
		\item[\rm(3)] $1+a^Db$ is invertible and $f$ has a minimal weak Drazin inverse $(1+a^Db)^{-1}a^D$ with ${\rm i}(f)\leqslant s$;
		\item[\rm(4)] $f$ has a minimal weak Drazin inverse $(1+a^{\scriptsize\textcircled{\tiny D}}b)^{-1}y$ for arbitrary $y\in T_l(a)$ with ${\rm i}(f)\leqslant s$.
	\end{itemize}
	In this case, $T_l(f)={\alpha} T_l(a)$, where ${\alpha}=(1+a^{\scriptsize\textcircled{\tiny D}}b)^{-1}$.
\end{theorem}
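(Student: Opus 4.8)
The plan is to leverage the fact that both the pseudo core inverse \(a^{\scriptsize\textcircled{\tiny D}}\) and the Drazin inverse \(a^D\) are minimal weak Drazin inverses, i.e.\ \(a^{\scriptsize\textcircled{\tiny D}},a^D\in T_l(a)\). Consequently the whole machinery of the previous section — in particular Theorem \ref{pD2.7} and Lemma \ref{2.3} — applies verbatim once \(a^+\) is specialized to either of them. I would observe first that statement (1) is \emph{exactly} condition (1) of Theorem \ref{pD2.7} read with \(a^+=a^{\scriptsize\textcircled{\tiny D}}\). Thus the backbone of the equivalence is inherited for free, and the only genuinely new work is (i) to upgrade ``minimal weak Drazin inverse'' to ``pseudo core inverse'' in (2), and (ii) to translate the \(a^{\scriptsize\textcircled{\tiny D}}\)-flavoured hypotheses into the \(a^D\)-flavoured hypotheses of (3). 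I would fix the abbreviations \(\alpha=(1+a^{\scriptsize\textcircled{\tiny D}}b)^{-1}\) and \(f_0=\alpha a^{\scriptsize\textcircled{\tiny D}}\) throughout.

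For \((1)\Leftrightarrow(2)\): applying Theorem \ref{pD2.7} with \(a^+=a^{\scriptsize\textcircled{\tiny D}}\) shows that (1) is equivalent to \(f_0\in T_l(f)\) with \({\rm i}(f)\leqslant s\), and simultaneously delivers \(T_l(f)=\alpha T_l(a)\). To promote \(f_0\) to the pseudo core inverse, I would invoke Lemma \ref{pD1.2}, which gives \(ff_0=aa^{\scriptsize\textcircled{\tiny D}}\) under the standing hypotheses; since \(a^{\scriptsize\textcircled{\tiny D}}\) is the pseudo core inverse of \(a\), the element \(aa^{\scriptsize\textcircled{\tiny D}}\) is self-adjoint, whence \((ff_0)^*=ff_0\). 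Combining this with \(f_0f^{k+1}=f^k\) and \(ff_0^2=f_0\) (which hold because \(f_0\in T_l(f)\)) verifies all three defining equations of the pseudo core inverse, so \(f^{\scriptsize\textcircled{\tiny D}}=f_0\) and \({\rm i}(f)\leqslant s\). The converse is immediate, because a pseudo core inverse is in particular a minimal weak Drazin inverse, returning us to condition (2) of Theorem \ref{pD2.7}. The equivalence \((1)\Leftrightarrow(4)\) then follows from \(T_l(f)=\alpha T_l(a)\): the inclusion ``\(\supseteq\)'' says precisely that every \(\alpha y\) with \(y\in T_l(a)\) is a minimal weak Drazin inverse of \(f\), and for the converse one specializes \(y=a^{\scriptsize\textcircled{\tiny D}}\in T_l(a)\) to recover the \(f_0\)-condition.

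The remaining equivalence \((1)\Leftrightarrow(3)\) is the transfer between \(a^{\scriptsize\textcircled{\tiny D}}\) and \(a^D\). Here I would apply Lemma \ref{2.3} to the pair \((a^{\scriptsize\textcircled{\tiny D}},a^D)\): under the standing assumption together with the ``left'' part \((1-aa^{\scriptsize\textcircled{\tiny D}})baa^{\scriptsize\textcircled{\tiny D}}=0\) of (1), Lemma \ref{2.3} yields that \(1+a^Db\) is invertible, that \((1-aa^D)baa^D=0\), and that \(\alpha a^D=(1+a^Db)^{-1}a^D\); feeding this identification into condition (3) of Theorem \ref{pD2.7} (taken with \(a^\pm=a^D\)) gives (3). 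For the converse, (3) combined with Theorem \ref{pD2.7} applied now with \(a^+=a^D\) produces the two \(a^D\)-conditions, and Lemma \ref{2.3} run in the reverse direction returns the left condition for \(a^{\scriptsize\textcircled{\tiny D}}\).

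I expect the only mildly delicate point to be matching the two ``range'' conditions \((1-aa^{\scriptsize\textcircled{\tiny D}})(a+b)^s(1-aa^{\scriptsize\textcircled{\tiny D}})=0\) and \((1-aa^D)(a+b)^s(1-aa^D)=0\), since \(aa^{\scriptsize\textcircled{\tiny D}}\) and \(aa^D\) are \emph{different} idempotents. The resolution I would use is that both lie in \(E_l(a)\) and hence share the right ideal \(aa^DR\), which forces the absorption relations \(aa^{\scriptsize\textcircled{\tiny D}}\,aa^D=aa^D\) and \(aa^D\,aa^{\scriptsize\textcircled{\tiny D}}=aa^{\scriptsize\textcircled{\tiny D}}\). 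Granting the left condition, the inductive computation in the proof of Theorem \ref{pD2.7} gives \((1-aa^{\scriptsize\textcircled{\tiny D}})f^s\,aa^{\scriptsize\textcircled{\tiny D}}=0\) (and likewise with \(a^D\)), so each range condition collapses to \((1-e)f^s=0\) with \(e=aa^{\scriptsize\textcircled{\tiny D}}\), respectively \(e=aa^D\); these two statements are interchangeable via the absorption relations (e.g.\ \(f^s=aa^Df^s\) implies \(aa^{\scriptsize\textcircled{\tiny D}}f^s=aa^{\scriptsize\textcircled{\tiny D}}aa^Df^s=aa^Df^s=f^s\)). This closes the loop with no heavy computation, the real substance being the self-adjointness upgrade in \((1)\Leftrightarrow(2)\) and the \(E_l(a)\)-bookkeeping just described.
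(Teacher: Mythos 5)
Your proposal is correct and follows essentially the same route as the paper: the equivalence $(1)\Leftrightarrow(2)$ is obtained exactly as in the paper's proof (Theorem \ref{pD2.7} gives $f_0=\alpha a^{\scriptsize\textcircled{\tiny D}}\in T_l(f)$, and Lemma \ref{pD1.2} gives $ff_0=aa^{\scriptsize\textcircled{\tiny D}}$, whose self-adjointness upgrades $f_0$ to the pseudo core inverse), while $(1)\Leftrightarrow(3)\Leftrightarrow(4)$ rests on Theorem \ref{pD2.7} together with Lemma \ref{2.3}, which is precisely the content of Remark \ref{pD2.12} that the paper cites. Your extra $E_l(a)$-bookkeeping for matching $(1-aa^{\scriptsize\textcircled{\tiny D}})f^s(1-aa^{\scriptsize\textcircled{\tiny D}})=0$ with $(1-aa^D)f^s(1-aa^D)=0$ is sound but avoidable: since Theorem \ref{pD2.7} fixes $a^{+}$ while allowing an arbitrary $a^{\pm}$ in its condition (3), one can run $(3)\Rightarrow(1)$ entirely with $a^{+}=a^{\scriptsize\textcircled{\tiny D}}$, $a^{\pm}=a^D$ (after the Lemma \ref{2.3} identification) and never compare the two range conditions directly.
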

\begin{proof}  $(1)\Leftrightarrow(3)\Leftrightarrow(4)$: They are clear by  Theorem \ref{pD2.7} and Remark \ref{pD2.12}.
	
	(1)$\Rightarrow$(2): It follows, by Lemma \ref{pD1.2} and Theorem \ref{pD2.7}, that $f\in R^D$ and $(1+a^{\scriptsize\textcircled{\tiny D}}b)^{-1}a^{\scriptsize\textcircled{\tiny D}}\in T_l(f)$ with $f(1+a^{\scriptsize\textcircled{\tiny D}}b)^{-1}a^{\scriptsize\textcircled{\tiny D}}=aa^{\scriptsize\textcircled{\tiny D}}$. Note $(aa^{\scriptsize\textcircled{\tiny D}})^*=aa^{\scriptsize\textcircled{\tiny D}}$, we conclude that $f$ is pseudo core invertible and $f^{\scriptsize\textcircled{\tiny D}}=(1+a^{\scriptsize\textcircled{\tiny D}}b)^{-1}a^{\scriptsize\textcircled{\tiny D}}$.
	
	(2)$\Rightarrow$(1): It is obvious by Theorem \ref{pD2.7}.
\end{proof}

\begin{corollary}\label{pD3.2}\emph{\cite{CCWa}}If $1+a^{\scriptsize\textcircled{\tiny D}}b$ is invertible, then the following statements are equivalent:
	\begin{itemize}
		\item[\rm(1)] $(1-aa^{\scriptsize\textcircled{\tiny D}})baa^{\scriptsize\textcircled{\tiny D}}=0$ and $(1-aa^{\scriptsize\textcircled{\tiny D}})(a+b)^s=0$;
		\item[\rm(2)] $f$ is pseudo core invertible and $f^{\scriptsize\textcircled{\tiny D}}=(1+a^{\scriptsize\textcircled{\tiny D}}b)^{-1}a^{\scriptsize\textcircled{\tiny D}}$ with ${\rm i}(f)\leqslant s$.
	\end{itemize}
\end{corollary}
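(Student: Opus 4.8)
The plan is to deduce this corollary directly from Theorem \ref{pD3.1} by showing that, under the standing hypothesis that $1+a^{\scriptsize\textcircled{\tiny D}}b$ is invertible, condition $(1)$ of the present corollary is equivalent to condition $(1)$ of Theorem \ref{pD3.1}. Since condition $(2)$ is literally the same in both statements, this equivalence immediately yields the result. Both versions of condition $(1)$ share the same first requirement $(1-aa^{\scriptsize\textcircled{\tiny D}})baa^{\scriptsize\textcircled{\tiny D}}=0$, so the entire task reduces to comparing the second requirements $(1-aa^{\scriptsize\textcircled{\tiny D}})(a+b)^s=0$ and $(1-aa^{\scriptsize\textcircled{\tiny D}})(a+b)^s(1-aa^{\scriptsize\textcircled{\tiny D}})=0$.

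The forward implication is immediate: right-multiplying $(1-aa^{\scriptsize\textcircled{\tiny D}})(a+b)^s=0$ by the idempotent $1-aa^{\scriptsize\textcircled{\tiny D}}$ produces the weaker relation appearing in Theorem \ref{pD3.1}. For the converse, I would first observe that $a^{\scriptsize\textcircled{\tiny D}}\in T_l(a)$, so that Lemmas \ref{a2} and \ref{1.1} apply with $a^{+}=a^{\scriptsize\textcircled{\tiny D}}$. Then the first part of condition $(1)$, namely $(1-aa^{\scriptsize\textcircled{\tiny D}})baa^{\scriptsize\textcircled{\tiny D}}=0$, gives $(1-aa^{\scriptsize\textcircled{\tiny D}})(a+b)aa^{\scriptsize\textcircled{\tiny D}}=0$ and, by the same inductive argument already carried out in the proof of Theorem \ref{pD2.7}, $(1-aa^{\scriptsize\textcircled{\tiny D}})(a+b)^{s}aa^{\scriptsize\textcircled{\tiny D}}=0$.

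With this in hand, I would split $(1-aa^{\scriptsize\textcircled{\tiny D}})(a+b)^s$ along the idempotent decomposition $1=aa^{\scriptsize\textcircled{\tiny D}}+(1-aa^{\scriptsize\textcircled{\tiny D}})$, writing
$$(1-aa^{\scriptsize\textcircled{\tiny D}})(a+b)^s = (1-aa^{\scriptsize\textcircled{\tiny D}})(a+b)^{s}aa^{\scriptsize\textcircled{\tiny D}} + (1-aa^{\scriptsize\textcircled{\tiny D}})(a+b)^{s}(1-aa^{\scriptsize\textcircled{\tiny D}}).$$
The first summand vanishes by the identity just derived, and the second vanishes by the hypothesis taken from Theorem \ref{pD3.1}, so $(1-aa^{\scriptsize\textcircled{\tiny D}})(a+b)^s=0$, as required. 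This establishes the equivalence of the two forms of condition $(1)$, and the corollary then follows from Theorem \ref{pD3.1}.

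I do not anticipate a genuine obstacle: every ingredient is already available. The only point requiring slight care is confirming that the inductive passage from $(1-aa^{\scriptsize\textcircled{\tiny D}})baa^{\scriptsize\textcircled{\tiny D}}=0$ to $(1-aa^{\scriptsize\textcircled{\tiny D}})(a+b)^{s}aa^{\scriptsize\textcircled{\tiny D}}=0$ is legitimate here, which is guaranteed because $a^{\scriptsize\textcircled{\tiny D}}$ lies in $T_l(a)$ and hence satisfies the hypotheses of Lemmas \ref{a2} and \ref{1.1}; this is exactly the computation performed in the $(1)\Rightarrow(2)$ step of Theorem \ref{pD2.7}.
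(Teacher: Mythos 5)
Your proof is correct and matches the paper's intended derivation: the corollary is stated as an immediate consequence of Theorem \ref{pD3.1}, and the equivalence you establish between the two forms of condition $(1)$ — that $(1-aa^{\scriptsize\textcircled{\tiny D}})baa^{\scriptsize\textcircled{\tiny D}}=0$ together with $(1-aa^{\scriptsize\textcircled{\tiny D}})(a+b)^s(1-aa^{\scriptsize\textcircled{\tiny D}})=0$ forces $(1-aa^{\scriptsize\textcircled{\tiny D}})(a+b)^s=0$, via the induction giving $(1-aa^{\scriptsize\textcircled{\tiny D}})(a+b)^saa^{\scriptsize\textcircled{\tiny D}}=0$ and the idempotent splitting — is exactly the computation already carried out in the $(1)\Rightarrow(2)$ step of the proof of Theorem \ref{pD2.7}, legitimately applied since $a^{\scriptsize\textcircled{\tiny D}}\in T_l(a)$. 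Nothing further is needed.
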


Wei \cite{Wei} gave equivalent conditions for  $(A+E)^{\#}=(I+A^{D}E)^{-1}A^{D}$, which involved two kinds of  generalized inverses. This motivates us to consider  whether the equality $(a+b)^D=(1+a^{\scriptsize\textcircled{\tiny D}}b)^{-1}a^{\scriptsize\textcircled{\tiny D}}$ is always valid.

\begin{lemma}\emph{\cite{GC1}}\label{pD3.4} Let $a\in R$. Then $a$ is $\ast$-DMP if and only if $a\in R^{\scriptsize\textcircled{\tiny D}}$ with $a^{\scriptsize\textcircled{\tiny D}}=a^D$.
\end{lemma}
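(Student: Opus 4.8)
The plan is to prove both implications directly from the defining equations, exploiting the observation that the pseudo core inverse and the Drazin inverse are tied to the \emph{same} idempotent $aa^{\scriptsize\textcircled{\tiny D}}=aa^D$, the distinction being only that the pseudo core inverse forces this idempotent to be Hermitian. Thus the bridge between the property ``$a$ is $\ast$-DMP'' and the identity $a^{\scriptsize\textcircled{\tiny D}}=a^D$ should be precisely the relation $(aa^D)^*=aa^D$.

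Before splitting into cases I would record two standard facts, writing $k={\rm i}(a)$ whenever $a$ is Drazin invertible: first, for each $m\geqslant k$ the power $a^m$ is group invertible with $(a^m)^{\#}=(a^D)^m$ and $a^m(a^m)^{\#}=(aa^D)^m=aa^D$, the last equality holding because $a$ and $a^D$ commute and $aa^D$ is idempotent; second, group invertibility of a single power $a^m$ already forces $a$ to be Drazin invertible with ${\rm i}(a)\leqslant m$. These reduce everything to manipulations of $aa^D$.

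For the implication $a^{\scriptsize\textcircled{\tiny D}}=a^D\Rightarrow\ast$-DMP, I would start from the pseudo core axiom $(aa^{\scriptsize\textcircled{\tiny D}})^*=aa^{\scriptsize\textcircled{\tiny D}}$; substituting the hypothesis gives $(aa^D)^*=aa^D$. Taking $m=k$, the element $a^k$ is group invertible and $a^k(a^k)^{\#}=aa^D$ is Hermitian, so $a^k$ is EP and hence $a$ is $\ast$-DMP. For the converse, suppose $a^m$ is EP for some $m$; then $a^m$ is group invertible, so $a$ is Drazin invertible with $k:={\rm i}(a)\leqslant m$, and EP-ness yields $(aa^D)^*=(a^m(a^m)^{\#})^*=a^m(a^m)^{\#}=aa^D$. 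It then remains to check that $x=a^D$ satisfies the three pseudo core equations with index $k$, namely $xa^{k+1}=a^k$, $ax^2=x$ and $(ax)^*=ax$; the first two are immediate from the Drazin identities $a^D a^{k+1}=a^k$ and $a(a^D)^2=a^D$, and the third is the Hermitian relation just established. By uniqueness of the pseudo core inverse, $a\in R^{\scriptsize\textcircled{\tiny D}}$ with $a^{\scriptsize\textcircled{\tiny D}}=a^D$.

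The computations themselves are routine; I expect the only delicate point to be the index bookkeeping. The main obstacle should be justifying that the Hermitian idempotent detected at an arbitrary EP-power $a^m$ coincides with the Drazin idempotent $aa^D$ --- that is, verifying both $m\geqslant{\rm i}(a)$ and the power-independent identity $a^m(a^m)^{\#}=aa^D$ --- since this is exactly what makes the single relation $(aa^D)^*=aa^D$ equivalent to the two named properties simultaneously.
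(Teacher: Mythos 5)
Your proof is correct. Note that the paper itself does not prove this lemma at all: it is imported verbatim from \cite{GC1}, so there is no in-paper argument to compare against. Your argument is the natural self-contained one, and it hinges on exactly the right pivot: both properties are equivalent to Drazin invertibility of $a$ together with the single relation $(aa^D)^*=aa^D$. The two ``standard facts'' you invoke are genuine and classical: that group invertibility of $a^m$ forces $a\in R^D$ with ${\rm i}(a)\leqslant m$ goes back to Drazin's 1958 paper (one can take $a^D=a^{m-1}(a^m)^{\#}$, using that $(a^m)^{\#}$ commutes with $a$), and the identities $(a^m)^{\#}=(a^D)^m$, $a^m(a^m)^{\#}=aa^D$ for $m\geqslant {\rm i}(a)$ are routine computations with commuting elements. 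Your index bookkeeping is also handled correctly: in the converse direction you first deduce ${\rm i}(a)\leqslant m$ from group invertibility of $a^m$, which is precisely what licenses the identification $a^m(a^m)^{\#}=aa^D$, and then the verification that $x=a^D$ satisfies the three pseudo core equations (with the Hermitian condition supplied by EP-ness of $a^m$) closes the argument via uniqueness of the pseudo core inverse. This is, in substance, the same proof as in the cited source, so no gap remains.
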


\begin{theorem}\label{pD3.3} If $1+a^{\scriptsize\textcircled{\tiny D}}b$ is invertible, then the following statements are equivalent:
	\begin{itemize}
		\item[\rm(1)] $(1-aa^{\scriptsize\textcircled{\tiny D}})baa^{\scriptsize\textcircled{\tiny D}}=0$ and $(a+b)^s(1-aa^{\scriptsize\textcircled{\tiny D}})=0$;
		\item[\rm(2)] $faa^{\scriptsize\textcircled{\tiny D}}=aa^{\scriptsize\textcircled{\tiny D}}f$ and $(1-aa^{\scriptsize\textcircled{\tiny D}})(a+b)^s=0$;
		\item[\rm(3)] $f$ is $\ast$-DMP and $f^{\scriptsize\textcircled{\tiny D}}=(1+a^{\scriptsize\textcircled{\tiny D}}b)^{-1}a^{\scriptsize\textcircled{\tiny D}}$ with ${\rm i}(f)\leqslant s$;
		\item[\rm(4)] $f\in R^D$ and $f^D=(1+a^{\scriptsize\textcircled{\tiny D}}b)^{-1}a^{\scriptsize\textcircled{\tiny D}}$ with ${\rm i}(f)\leqslant s$.
	\end{itemize}
	In this case, $T_l(f)={\alpha} T_l(a)$, where ${\alpha}=(1+a^{\scriptsize\textcircled{\tiny D}}b)^{-1}$.
\end{theorem}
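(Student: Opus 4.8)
The plan is to prove the cycle $(1)\Rightarrow(4)\Rightarrow(2)\Rightarrow(1)$ together with the short equivalence $(3)\Leftrightarrow(4)$, which closes all four statements. Throughout I would write $\alpha=(1+a^{\scriptsize\textcircled{\tiny D}}b)^{-1}$ and $f_0=\alpha a^{\scriptsize\textcircled{\tiny D}}$, and record at the outset the facts I will lean on: both $a^{\scriptsize\textcircled{\tiny D}}$ and $a^{D}$ lie in $T_l(a)$ (each satisfies the two defining equations $xa^{k+1}=a^{k}$, $ax^{2}=x$), the idempotent $aa^{\scriptsize\textcircled{\tiny D}}$ is Hermitian, and, by Lemmas \ref{a2} and \ref{1.1}, $aa^{\scriptsize\textcircled{\tiny D}}a^{2}a^{\scriptsize\textcircled{\tiny D}}=a^{2}a^{\scriptsize\textcircled{\tiny D}}$, so that $(1-aa^{\scriptsize\textcircled{\tiny D}})a^{2}a^{\scriptsize\textcircled{\tiny D}}=0$.

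For $(1)\Rightarrow(4)$ I would first observe that the right-hand condition $(a+b)^{s}(1-aa^{\scriptsize\textcircled{\tiny D}})=0$ forces the two-sided condition $(1-aa^{\scriptsize\textcircled{\tiny D}})(a+b)^{s}(1-aa^{\scriptsize\textcircled{\tiny D}})=0$, so the hypothesis of Theorem \ref{pD3.1} is met; hence $f$ is pseudo core invertible with $f^{\scriptsize\textcircled{\tiny D}}=f_{0}\in T_l(f)$ and ${\rm i}(f)\leqslant s$, and Lemma \ref{pD1.2} gives $ff_{0}=aa^{\scriptsize\textcircled{\tiny D}}$. The decisive step is to upgrade $f_{0}$ from a minimal weak Drazin inverse to the Drazin inverse: the right condition yields $f^{s}\cdot ff_{0}=f^{s}$, i.e.\ $f^{s+1}f_{0}=f^{s}$, while $f_{0}\in T_l(f)$ with ${\rm i}(f)\leqslant s$ yields $f_{0}f^{s+1}=f^{s}$. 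Thus $f_{0}$ commutes with $f^{s+1}$, and Lemma \ref{m1} forces $f_{0}=f^{D}$, which is (4); since simultaneously $f^{\scriptsize\textcircled{\tiny D}}=f_{0}=f^{D}$, Lemma \ref{pD3.4} shows $f$ is $\ast$-DMP.

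The equivalence $(3)\Leftrightarrow(4)$ is routine via Lemma \ref{pD3.4}: if $f$ is $\ast$-DMP then $f^{\scriptsize\textcircled{\tiny D}}=f^{D}$ and the two formulas coincide; conversely, if $f^{D}=f_{0}$ then $ff_{0}^{2}=f(f^{D})^{2}=f^{D}=f_{0}$, so Lemma \ref{pD1.2} gives $ff^{D}=aa^{\scriptsize\textcircled{\tiny D}}$, which is Hermitian, whence $f^{D}$ already satisfies the three defining equations of the pseudo core inverse and $f^{\scriptsize\textcircled{\tiny D}}=f^{D}$. For $(4)\Rightarrow(2)$ I would use $ff^{D}=aa^{\scriptsize\textcircled{\tiny D}}$ together with the commutativity $ff^{D}=f^{D}f$ to compute $faa^{\scriptsize\textcircled{\tiny D}}=f^{2}f^{D}=aa^{\scriptsize\textcircled{\tiny D}}f$, and $aa^{\scriptsize\textcircled{\tiny D}}f^{s}=ff^{D}f^{s}=f^{s}$ to obtain the left condition. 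Finally, for $(2)\Rightarrow(1)$, left-multiplying $faa^{\scriptsize\textcircled{\tiny D}}=aa^{\scriptsize\textcircled{\tiny D}}f$ by $1-aa^{\scriptsize\textcircled{\tiny D}}$ and invoking $(1-aa^{\scriptsize\textcircled{\tiny D}})a^{2}a^{\scriptsize\textcircled{\tiny D}}=0$ gives $(1-aa^{\scriptsize\textcircled{\tiny D}})baa^{\scriptsize\textcircled{\tiny D}}=0$, while the commutativity combined with the left condition $aa^{\scriptsize\textcircled{\tiny D}}f^{s}=f^{s}$ yields $f^{s}=f^{s}aa^{\scriptsize\textcircled{\tiny D}}$, namely the right condition. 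The displayed identity $T_l(f)=\alpha T_l(a)$ is then inherited directly from Theorem \ref{pD3.1}, whose hypothesis (1) was verified en route.

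The main obstacle I anticipate is the passage from the pseudo core inverse to the Drazin inverse, i.e.\ proving $f$ is $\ast$-DMP. The subtlety is that $aa^{\scriptsize\textcircled{\tiny D}}$ need not commute with $a$, so one cannot mimic the Drazin argument of Theorem \ref{pD2.10} verbatim; the leverage comes instead from two features special to the pseudo core setting. First, the one-sided condition promotes $f_{0}$ to an element commuting with a power of $f$, which activates Lemma \ref{m1}; second, the Hermitian nature of $aa^{\scriptsize\textcircled{\tiny D}}=ff^{D}$ automatically supplies the projection equation $(ff^{D})^{*}=ff^{D}$ required by the pseudo core inverse. Handling the commutation relations cleanly—especially verifying $(1-aa^{\scriptsize\textcircled{\tiny D}})a^{2}a^{\scriptsize\textcircled{\tiny D}}=0$ and $f_{0}f^{s+1}=f^{s}$ through Lemma \ref{1.1}—is where most of the care is needed.
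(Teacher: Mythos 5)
Your proposal is correct and follows essentially the same route as the paper: both reduce to Theorem \ref{pD3.1} to obtain $f^{\scriptsize\textcircled{\tiny D}}=(1+a^{\scriptsize\textcircled{\tiny D}}b)^{-1}a^{\scriptsize\textcircled{\tiny D}}$, use Lemma \ref{pD1.2} to get $ff_0=aa^{\scriptsize\textcircled{\tiny D}}$, upgrade to the Drazin inverse via the commutation $f^{s+1}f_0=f^s=f_0f^{s+1}$ and Lemma \ref{m1}, and invoke Lemma \ref{pD3.4} for the $\ast$-DMP conclusion, with the computations for (2) and back to (1) matching the paper's almost verbatim. The only cosmetic differences are the orientation of the cycle (you run $(1)\Rightarrow(4)\Rightarrow(2)\Rightarrow(1)$ where the paper runs $(1)\Rightarrow(3)\Rightarrow(2)\Rightarrow(1)$) and your slightly more direct $(4)\Rightarrow(3)$, which checks the three defining equations of the pseudo core inverse against the Hermitian idempotent $ff^D=aa^{\scriptsize\textcircled{\tiny D}}$ instead of routing through Theorems \ref{pD2.7} and \ref{pD3.1} again.
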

\begin{proof} (1)$\Rightarrow$(3): It follows, from Theorem \ref{pD3.1}, that $f$ is pseudo core invertible and $f^{\scriptsize\textcircled{\tiny D}}=(1+a^{\scriptsize\textcircled{\tiny D}}b)^{-1}a^{\scriptsize\textcircled{\tiny D}}$ with ${\rm i}(f)\leqslant s$. By Lemma \ref{pD1.2}, it follows that $ff^{\scriptsize\textcircled{\tiny D}}=aa^{\scriptsize\textcircled{\tiny D}}$, which implies $(a+b)^s(1-f(1+a^{\scriptsize\textcircled{\tiny D}}b)^{-1}a^{\scriptsize\textcircled{\tiny D}})=0$. That is, $f^{s+1}f^{\scriptsize\textcircled{\tiny D}}=f^s$. By Lemma \ref{m1}, we can immediately get $f^D=f^{\scriptsize\textcircled{\tiny D}}$. So, $f$ is $\ast$-DMP.

	(3)$\Rightarrow$(2): According to Lemma \ref{pD1.2} and Theorem \ref{pD3.1}, $ff^{\scriptsize\textcircled{\tiny D}}=aa^{\scriptsize\textcircled{\tiny D}}$ and $(1-aa^{\scriptsize\textcircled{\tiny D}})(a+b)^s(1-aa^{\scriptsize\textcircled{\tiny D}})=0$.  Since $f$ is $\ast$-DMP, we can get $f^D=f^{\scriptsize\textcircled{\tiny D}}$. Therefore, $faa^{\scriptsize\textcircled{\tiny D}}=f^2f^D=ff^Df=aa^{\scriptsize\textcircled{\tiny D}}f$ and $(1-aa^{\scriptsize\textcircled{\tiny D}})(a+b)^s=(1-ff^D)f^s=0$.
	
	(2)$\Rightarrow$(1): It is obvious that $(a+b)^s(1-aa^{\scriptsize\textcircled{\tiny D}})=(1-aa^{\scriptsize\textcircled{\tiny D}})(a+b)^s=0$. Since $faa^{\scriptsize\textcircled{\tiny D}}=aa^{\scriptsize\textcircled{\tiny D}}f$, we conclude that $baa^{\scriptsize\textcircled{\tiny D}}=aa^{\scriptsize\textcircled{\tiny D}}f-a^2a^{\scriptsize\textcircled{\tiny D}}$, which implies that $aa^{\scriptsize\textcircled{\tiny D}}baa^{\scriptsize\textcircled{\tiny D}}=baa^{\scriptsize\textcircled{\tiny D}}$ by Lemmas \ref{a2} and \ref{1.1}. That is, $(1-aa^{\scriptsize\textcircled{\tiny D}})baa^{\scriptsize\textcircled{\tiny D}}=0$.
	
	(3)$\Rightarrow$(4): It is clear.
	
	(4)$\Rightarrow$(3): It follows, from Theorem \ref{pD2.7}, that $(1-aa^{\scriptsize\textcircled{\tiny D}})baa^{\scriptsize\textcircled{\tiny D}}=0$ and $(1-aa^{\scriptsize\textcircled{\tiny D}})(a+b)^s(1-aa^{\scriptsize\textcircled{\tiny D}})=0$. Then, by Theorem \ref{pD3.1}, we can get that $f$ is pseudo core invertible and $f^{\scriptsize\textcircled{\tiny D}}=(1+a^{\scriptsize\textcircled{\tiny D}}b)^{-1}a^{\scriptsize\textcircled{\tiny D}}$. The rest proof is easy to obtain by Lemma \ref{pD3.4}.
\end{proof}

\begin{remark} Corollary \ref{pD3.2}$(1)$ looks similar to  Theorem \ref{pD3.3}$(1)$, but the latter one is stronger. We give an example of this fact. In $\mathbb{C}^{2\times 2}$, take the involution as the conjugate transposition,  $a=\left(\begin{matrix}
		1&1\\
		0&0
	\end{matrix}
	\right)$ and $b=\left(\begin{matrix}
		1&0\\
		0&0
	\end{matrix}
	\right)$. It is easy to verify $aa^{\scriptsize\textcircled{\tiny D}}=\left(\begin{matrix}
		1&0\\
		0&0
	\end{matrix}
	\right)$, $(1-aa^{\scriptsize\textcircled{\tiny D}})baa^{\scriptsize\textcircled{\tiny D}}=0$ and $(1-aa^{\scriptsize\textcircled{\tiny D}})(a+b)=0$. However, $(a+b)(1-aa^{\scriptsize\textcircled{\tiny D}})=\left(\begin{matrix}
		0&1\\
		0&0
	\end{matrix}
	\right)\neq 0.$
\end{remark}

\begin{proposition} If $1+a^{\scriptsize\textcircled{\tiny D}}b$ is invertible, then the following statements are equivalent:
	\begin{itemize}
		\item[\rm(1)] $(1-aa^{\scriptsize\textcircled{\tiny D}})baa^{\scriptsize\textcircled{\tiny D}}=0$ and $(a+b)(1-aa^{\scriptsize\textcircled{\tiny D}})=0$;
		\item[\rm(2)] $faa^{\scriptsize\textcircled{\tiny D}}=aa^{\scriptsize\textcircled{\tiny D}}f=f$;
		\item[\rm(3)] $f$ is EP and $f^{\scriptsize\textcircled{\tiny \#}}=(1+a^{\scriptsize\textcircled{\tiny D}}b)^{-1}a^{\scriptsize\textcircled{\tiny D}}$.
	\end{itemize}
	In this case, $T_l(f)={\alpha} T_l(a)$, where ${\alpha}=(1+a^{\scriptsize\textcircled{\tiny D}}b)^{-1}$.
\end{proposition}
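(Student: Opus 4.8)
The plan is to read this Proposition as the index-one specialization of Theorem~\ref{pD3.3}: condition~(1) here is \emph{verbatim} condition~(1) of Theorem~\ref{pD3.3} with $s=1$, so that theorem already supplies the hard analytic content, and what remains is to replace ``$\ast$-DMP with ${\rm i}(f)\le 1$'' by the sharper notion ``EP'', and to upgrade the mere commutation $faa^{\scriptsize\textcircled{\tiny D}}=aa^{\scriptsize\textcircled{\tiny D}}f$ furnished by Theorem~\ref{pD3.3} to the two-sided fixed-point equation $faa^{\scriptsize\textcircled{\tiny D}}=aa^{\scriptsize\textcircled{\tiny D}}f=f$. I would prove the cycle (1)$\Rightarrow$(3)$\Rightarrow$(2)$\Rightarrow$(1); the final assertion $T_l(f)={\alpha} T_l(a)$ is then inherited directly from Theorem~\ref{pD3.1} (equivalently Theorem~\ref{pD3.3}), since (1) is precisely their hypothesis with $s=1$.

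For (1)$\Rightarrow$(3), I would apply Theorem~\ref{pD3.3} with $s=1$ to obtain that $f$ is $\ast$-DMP with ${\rm i}(f)\le 1$ and $f^{\scriptsize\textcircled{\tiny D}}={\alpha}a^{\scriptsize\textcircled{\tiny D}}$, where ${\alpha}=(1+a^{\scriptsize\textcircled{\tiny D}}b)^{-1}$. The one genuinely new observation is the identification ``$\ast$-DMP of index $\le 1$ equals EP''. By Lemma~\ref{pD3.4}, being $\ast$-DMP gives $f^{\scriptsize\textcircled{\tiny D}}=f^{D}$; since ${\rm i}(f)\le 1$ collapses the Drazin inverse to the group inverse and the pseudo core inverse to the core inverse, this reads $f^{\scriptsize\textcircled{\tiny \#}}=f^{\#}$, and $f^{\scriptsize\textcircled{\tiny \#}}=f^{\#}$ is equivalent to $f$ being EP (it forces $ff^{\#}=ff^{\scriptsize\textcircled{\tiny \#}}$ to be self-adjoint). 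Hence $f$ is EP and $f^{\scriptsize\textcircled{\tiny \#}}=f^{\scriptsize\textcircled{\tiny D}}={\alpha}a^{\scriptsize\textcircled{\tiny D}}$, which is~(3).

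For (3)$\Rightarrow$(2), I would first note that an EP element is $\ast$-DMP with ${\rm i}(f)\le 1$ and $f^{\scriptsize\textcircled{\tiny D}}=f^{\scriptsize\textcircled{\tiny \#}}$, so condition~(3) of Theorem~\ref{pD3.3} holds with $s=1$; consequently its condition~(1) holds and Lemma~\ref{pD1.2} yields $ff^{\scriptsize\textcircled{\tiny D}}=aa^{\scriptsize\textcircled{\tiny D}}$. Writing $p=aa^{\scriptsize\textcircled{\tiny D}}$, EP-ness gives $p=ff^{\#}=f^{\#}f$, whence $fp=ff^{\#}f=f$ and $pf=ff^{\#}f=f$ (using $f^{\#}f=ff^{\#}$ and $ff^{\#}f=f$); this is exactly $faa^{\scriptsize\textcircled{\tiny D}}=aa^{\scriptsize\textcircled{\tiny D}}f=f$.

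Finally, for (2)$\Rightarrow$(1), set $p=aa^{\scriptsize\textcircled{\tiny D}}$. The relation $(a+b)(1-p)=f-fp=0$ is immediate from $fp=f$. For $(1-p)bp=0$, I would substitute $b=f-a$ and use both $fp=f$ and $pf=f$ to reduce $(1-p)bp$ to $-(1-p)ap$; the argument then hinges on the identity $(1-aa^{\scriptsize\textcircled{\tiny D}})\,a\,aa^{\scriptsize\textcircled{\tiny D}}=0$, which I would establish from the defining relation $a^{\scriptsize\textcircled{\tiny D}}a^{k+1}=a^{k}$ (with $k={\rm i}(a)$) together with Lemma~\ref{a2}(1): since $aa^{\scriptsize\textcircled{\tiny D}}=a^{k}(a^{\scriptsize\textcircled{\tiny D}})^{k}$ we get $a\,aa^{\scriptsize\textcircled{\tiny D}}=a^{k+1}(a^{\scriptsize\textcircled{\tiny D}})^{k}$, and $aa^{\scriptsize\textcircled{\tiny D}}a^{k+1}=a(a^{\scriptsize\textcircled{\tiny D}}a^{k+1})=a^{k+1}$ then forces $aa^{\scriptsize\textcircled{\tiny D}}\,a\,aa^{\scriptsize\textcircled{\tiny D}}=a\,aa^{\scriptsize\textcircled{\tiny D}}$. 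I expect this last identity, and more broadly the bookkeeping that converts the weak commutation supplied by Theorem~\ref{pD3.3} into the sharp equation $faa^{\scriptsize\textcircled{\tiny D}}=aa^{\scriptsize\textcircled{\tiny D}}f=f$, to be the main obstacle; everything else reduces to a direct appeal to the already-established Theorems~\ref{pD3.1} and~\ref{pD3.3} and Lemmas~\ref{pD1.2} and~\ref{pD3.4}.
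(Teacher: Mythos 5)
Your proof is correct and follows exactly the route the paper intends: the Proposition is stated without proof as the $s=1$ specialization of Theorem~\ref{pD3.3} (via Theorem~\ref{pD3.1}, Lemmas~\ref{pD1.2} and~\ref{pD3.4}), and your identification of ``$\ast$-DMP with ${\rm i}(f)\le 1$'' with ``EP'', together with the computation $(1-aa^{\scriptsize\textcircled{\tiny D}})a\,aa^{\scriptsize\textcircled{\tiny D}}=0$ in (2)$\Rightarrow$(1), supplies precisely the missing details. No gaps.
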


We present the next result about the absorption law as an application of Theorem \ref{pD3.1}. For similar results of $m$-weak group inverses, readers can see \cite{Zhouhace2025}.

\begin{theorem} If  $f\in  R^{\scriptsize\textcircled{\tiny D}}$, then the following statements are equivalent:
	\begin{itemize}
		\item[\rm(1)] $a^{\scriptsize\textcircled{\tiny D}}(a+f)f^{\scriptsize\textcircled{\tiny D}}=a^{\scriptsize\textcircled{\tiny D}}+f^{\scriptsize\textcircled{\tiny D}}$;
		\item[\rm(2)] $1+a^{\scriptsize\textcircled{\tiny D}}b$ is invertible and $f^{\scriptsize\textcircled{\tiny D}}=(1+a^{\scriptsize\textcircled{\tiny D}}b)^{-1}a^{\scriptsize\textcircled{\tiny D}}$;
		\item[\rm(3)] $1+a^Db$ is invertible and $(1+a^Db)^{-1}a^D \in T_l(f)$;
		\item[\rm(4)] $1+a^{\scriptsize\textcircled{\tiny D}}b$ is invertible and $(1+a^{\scriptsize\textcircled{\tiny D}}b)^{-1}x\in T_l(f)$ for arbitrary $x\in T_l(a)$.
	\end{itemize}
\end{theorem}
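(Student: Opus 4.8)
The plan is to treat all four conditions as reformulations of condition (2), since (2) is the precise additive identity $f^{\scriptsize\textcircled{\tiny D}}=(1+a^{\scriptsize\textcircled{\tiny D}}b)^{-1}a^{\scriptsize\textcircled{\tiny D}}$ that the preceding machinery was built to detect. The key structural observation is that both the pseudo core inverse $a^{\scriptsize\textcircled{\tiny D}}$ and the Drazin inverse $a^D$ lie in $T_l(a)$ (their defining equations include $xa^{k+1}=a^{k}$ and $ax^2=x$), and likewise $f^{\scriptsize\textcircled{\tiny D}}\in T_l(f)$. Thus each displayed condition is an instance of a statement about minimal weak Drazin inverses, and the whole theorem should reduce to an assembly of Theorems \ref{pD3.8}, \ref{pD3.1}, \ref{pD2.7} and Lemma \ref{2.3}, with the auxiliary integer $s$ chosen to be ${\rm i}(f)$ so that every index hypothesis ${\rm i}(f)\leqslant s$ becomes automatic.

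First I would prove (1)$\Leftrightarrow$(2). Applying Theorem \ref{pD3.8} with the minimal weak Drazin inverses $a^{+}=a^{\scriptsize\textcircled{\tiny D}}$ of $a$ and $f^{+}=f^{\scriptsize\textcircled{\tiny D}}$ of $f$ gives at once that $a^{\scriptsize\textcircled{\tiny D}}(a+f)f^{\scriptsize\textcircled{\tiny D}}=a^{\scriptsize\textcircled{\tiny D}}+f^{\scriptsize\textcircled{\tiny D}}$ holds if and only if $1+a^{\scriptsize\textcircled{\tiny D}}b$ is invertible and $f^{\scriptsize\textcircled{\tiny D}}=(1+a^{\scriptsize\textcircled{\tiny D}}b)^{-1}a^{\scriptsize\textcircled{\tiny D}}$, which is exactly (1)$\Leftrightarrow$(2). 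Next I would dispatch the implications sharing the hypothesis that $1+a^{\scriptsize\textcircled{\tiny D}}b$ is invertible: the equivalence (2)$\Leftrightarrow$(4) and the implication (2)$\Rightarrow$(3) both follow directly from Theorem \ref{pD3.1}, reading the present (2), (3) and (4) as statements (2), (3) and (4) of Theorem \ref{pD3.1} with $s={\rm i}(f)$, where the membership $(1+a^{\scriptsize\textcircled{\tiny D}}b)^{-1}x\in T_l(f)$ is precisely the assertion that $f$ has that minimal weak Drazin inverse.

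The only implication demanding genuine work is (3)$\Rightarrow$(2), and the obstacle is that (3) only supplies the invertibility of $1+a^{D}b$, whereas Theorem \ref{pD3.1} presupposes the invertibility of $1+a^{\scriptsize\textcircled{\tiny D}}b$. I would bridge this gap as follows. Since $a^{D}\in T_l(a)$ and, by (3), $(1+a^{D}b)^{-1}a^{D}\in T_l(f)$ with $1+a^{D}b$ invertible, Theorem \ref{pD2.7} (applied with $a^{+}=a^{D}$ and $s={\rm i}(f)$) yields $(1-aa^{D})baa^{D}=0$. Then Lemma \ref{2.3}, applied to the pair of minimal weak Drazin inverses $a^{D}$ and $a^{\scriptsize\textcircled{\tiny D}}$, upgrades ``$1+a^{D}b$ invertible and $(1-aa^{D})baa^{D}=0$'' to ``$1+a^{\scriptsize\textcircled{\tiny D}}b$ invertible and $(1-aa^{\scriptsize\textcircled{\tiny D}})baa^{\scriptsize\textcircled{\tiny D}}=0$''. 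With the invertibility of $1+a^{\scriptsize\textcircled{\tiny D}}b$ now established, the hypothesis of Theorem \ref{pD3.1} is met, and its implication (3)$\Rightarrow$(2) delivers $f^{\scriptsize\textcircled{\tiny D}}=(1+a^{\scriptsize\textcircled{\tiny D}}b)^{-1}a^{\scriptsize\textcircled{\tiny D}}$, which is (2). I expect this invertibility transfer to be the main obstacle; everything else is a bookkeeping reduction to the minimal weak Drazin inverse results already proved, requiring no fresh computation beyond those cited lemmas.
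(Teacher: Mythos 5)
Your proposal is correct and follows essentially the same route as the paper: (1)$\Leftrightarrow$(2) via Theorem \ref{pD3.8} with $a^{+}=a^{\scriptsize\textcircled{\tiny D}}$, $f^{+}=f^{\scriptsize\textcircled{\tiny D}}$, and (2)$\Leftrightarrow$(3)$\Leftrightarrow$(4) via Theorem \ref{pD3.1} with $s={\rm i}(f)$. The only difference is that you explicitly fill in the invertibility transfer from $1+a^{D}b$ to $1+a^{\scriptsize\textcircled{\tiny D}}b$ in (3)$\Rightarrow$(2) using Theorem \ref{pD2.7} and Lemma \ref{2.3} --- exactly the mechanism behind Remark \ref{pD2.12}, which the paper compresses into ``easy to verify by Theorem \ref{pD3.1}.''
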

\begin{proof}(1)$\Leftrightarrow$(2): It follows from Theorem \ref{pD3.8}.

	(2)$\Leftrightarrow$(3)$\Leftrightarrow$(4): It is easy to verify by Theorem \ref{pD3.1}.
\end{proof}

At last, we apply above results to  complex matrices. Let $A,~F\in  \mathbb{C}^{n\times n}$ and $B=F-A$. We recall \cite{BG,W} that the core-nilpotent form and the Schur form of $A$ are given as follows.
There exists an invertible matrix $P$ and a unitary matrix $U$ such that
\begin{eqnarray*}
A=P\left(\begin{matrix}
C&0\\
0&N
\end{matrix}
\right)P^{-1}=U\left(\begin{matrix}
T&S\\
0&M
\end{matrix}
\right)U^{\ast},
\end{eqnarray*}  where $C,T$ are invertible and $N,M$ are nilpotent. In these cases, suppose
\begin{eqnarray*}
B=P\left(\begin{matrix}
B_1&B_2\\
B_3&B_4
\end{matrix}
\right)P^{-1}=U\left(\begin{matrix}
B_1^{\prime}&B_2^{\prime}\\
B_3^{\prime}&B_4^{\prime}
\end{matrix}
\right)U^{\ast}.
\end{eqnarray*}

 The pseudo core inverse of $A$ is exactly core-EP inverse of $A$, which is denoted by $A^{\scriptsize\textcircled{\tiny \dag}}$.
\begin{corollary} The following statements are equivalent:
	\begin{itemize}
		\item[\rm(1)] $A^{\scriptsize\textcircled{\tiny \dag}}(A+F)F^{\scriptsize\textcircled{\tiny \dag}}=A^{\scriptsize\textcircled{\tiny \dag}}+F^{\scriptsize\textcircled{\tiny \dag}}$;
		\item[\rm(2)] $I+A^{\scriptsize\textcircled{\tiny \dag}}B$ is invertible and $F^{\scriptsize\textcircled{\tiny \dag}}=(I+A^{\scriptsize\textcircled{\tiny \dag}}B)^{-1}A^{\scriptsize\textcircled{\tiny \dag}}$;
		\item[\rm(3)] $I+A^DB$ is invertible and $(I+A^DB)^{-1}A^D \in T_l(F)$;
		\item[\rm(4)] $B_3=0$, $C+B_1$ is invertible and $N+B_4$ is nilpotent.
	    \item[\rm(5)] $B_3^{\prime}=0$, $T+B_1^{\prime}$ is invertible and $M+B_4^{\prime}$ is nilpotent.
	\end{itemize}
\end{corollary}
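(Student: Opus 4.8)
The plan is to separate the three "abstract" equivalences $(1)\Leftrightarrow(2)\Leftrightarrow(3)$ from the two block characterizations $(4)$ and $(5)$. Specializing $R$ to $\mathbb{C}^{n\times n}$ with the conjugate-transpose involution and setting $a=A$, $f=F$, $b=B=F-A$, every complex matrix is core-EP invertible, so $F\in R^{\scriptsize\textcircled{\tiny D}}$ holds automatically and the pseudo core inverse coincides with the core-EP inverse $A^{\scriptsize\textcircled{\tiny \dag}}$. Hence $(1)\Leftrightarrow(2)\Leftrightarrow(3)$ is exactly the immediately preceding theorem (the absorption law for pseudo core inverses) read in this special case. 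It then remains to match the Drazin condition $(3)$ with the core-nilpotent (similarity) data $(4)$, and the core-EP condition $(2)$ with the Schur (unitary) data $(5)$.

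For $(3)\Leftrightarrow(4)$ I would use that $A^D\in T_l(A)$ and apply Theorem \ref{pD2.7} with $a^{+}=A^D$: condition $(3)$ holds iff $I+A^DB$ is invertible, $(I-AA^D)BAA^D=0$, and $(I-AA^D)(A+B)^s(I-AA^D)=0$ for $s$ large (say $s=n\geq {\rm i}(F)$). In the core-nilpotent form $A^D=P\,{\rm diag}(C^{-1},0)\,P^{-1}$, $AA^D=P\,{\rm diag}(I,0)\,P^{-1}$, $I-AA^D=P\,{\rm diag}(0,I)\,P^{-1}$. A direct block product gives $(I-AA^D)BAA^D=P\begin{pmatrix}0&0\\B_3&0\end{pmatrix}P^{-1}$, which vanishes iff $B_3=0$; and $I+A^DB=P\begin{pmatrix}I+C^{-1}B_1&C^{-1}B_2\\0&I\end{pmatrix}P^{-1}$ is invertible iff $C+B_1=C(I+C^{-1}B_1)$ is invertible. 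Assuming $B_3=0$, $F$ is block upper triangular in the $P$-coordinates, so $F^s$ has bottom-right block $(N+B_4)^s$; sandwiching by ${\rm diag}(0,I)$ annihilates the other three blocks, whence $(I-AA^D)F^s(I-AA^D)=P\,{\rm diag}(0,(N+B_4)^s)\,P^{-1}$, which vanishes (for $s=n$) iff $N+B_4$ is nilpotent. These three requirements are precisely $(4)$.

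For $(2)\Leftrightarrow(5)$ I would instead invoke Theorem \ref{pD3.1}, by which $(2)$ holds iff $I+A^{\scriptsize\textcircled{\tiny \dag}}B$ is invertible, $(I-AA^{\scriptsize\textcircled{\tiny \dag}})BAA^{\scriptsize\textcircled{\tiny \dag}}=0$, and $(I-AA^{\scriptsize\textcircled{\tiny \dag}})(A+B)^s(I-AA^{\scriptsize\textcircled{\tiny \dag}})=0$. The Schur form yields $A^{\scriptsize\textcircled{\tiny \dag}}=U\,{\rm diag}(T^{-1},0)\,U^{*}$ and the Hermitian projector $AA^{\scriptsize\textcircled{\tiny \dag}}=U\,{\rm diag}(I,0)\,U^{*}$ onto $\mathcal{R}(A^k)$. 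Running the identical block computation in the $U$-coordinates gives $B_3'=0$, $T+B_1'$ invertible, and $M+B_4'$ nilpotent, which is exactly $(5)$.

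The computations themselves are routine block algebra; the points requiring care are three. First, one must pair each generalized inverse with the decomposition adapted to it: the Drazin data $A^D$ live in the similarity form with $P$, whereas the core-EP data must be read off the Schur form with the unitary $U$, since only there is $AA^{\scriptsize\textcircled{\tiny \dag}}$ the orthogonal projector matching the defining equation $(AA^{\scriptsize\textcircled{\tiny \dag}})^{*}=AA^{\scriptsize\textcircled{\tiny \dag}}$. Second, the theorems carry an index $s$ with ${\rm i}(F)\leq s$ while $(4)$ and $(5)$ only assert nilpotency; these agree because an $n\times n$ matrix is nilpotent iff its $n$-th power vanishes, so fixing $s=n$ throughout is harmless. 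Finally, the reduction of the middle condition to a single corner relies on first using $B_3=0$ (resp. $B_3'=0$) to make $F$ block upper triangular, after which the two flanking projectors kill all but the bottom-right block of $F^s$; this is the one step where the ordering of the argument matters.
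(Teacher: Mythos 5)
Your proof is correct and takes essentially the approach the paper intends: the corollary is stated there without explicit proof, as the specialization of the preceding absorption-law theorem for pseudo core inverses (giving $(1)\Leftrightarrow(2)\Leftrightarrow(3)$, with $F$ automatically core-EP invertible over $\mathbb{C}^{n\times n}$), combined with block computations in the core-nilpotent form for $(3)\Leftrightarrow(4)$ and in the Schur (core-EP decomposition) form for $(2)\Leftrightarrow(5)$. Your handling of the two delicate points --- pairing each inverse with its adapted decomposition, and replacing the index bound ${\rm i}(F)\leq s$ by nilpotency via $s=n$ --- is exactly what is needed.
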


The next result involves two kinds of generalized inverses.

\begin{corollary} The following statements are equivalent:
	\begin{itemize}
		\item[\rm(1)] $A^{\scriptsize\textcircled{\tiny \dag}}(A+F)F^D=A^{\scriptsize\textcircled{\tiny \dag}}+F^D$;
		\item[\rm(2)] $F$ is $\ast$-DMP, $I+A^{\scriptsize\textcircled{\tiny \dag}}B$ is invertible and $F^{\scriptsize\textcircled{\tiny \dag}}=(I+A^{\scriptsize\textcircled{\tiny \dag}}B)^{-1}A^{\scriptsize\textcircled{\tiny \dag}}$;
\item[\rm(3)] $I+A^{\scriptsize\textcircled{\tiny \dag}}B$ is invertible and $F^D=(I+A^{\scriptsize\textcircled{\tiny \dag}}B)^{-1}A^{\scriptsize\textcircled{\tiny \dag}}$;
		\item[\rm(4)] $S+B_2^{\prime}=0$, $B_3^{\prime}=0$, $T+B_1^{\prime}$ is invertible and $M+B_4^{\prime}$ is nilpotent.
	\end{itemize}
\end{corollary}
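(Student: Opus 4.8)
The plan is to route the whole chain through the two abstract theorems already established, leaving genuine computation only for the very last equivalence. The key observation is that $A^{\scriptsize\textcircled{\tiny \dag}}$ is a minimal weak Drazin inverse of $A$ — the pseudo core inverse satisfies $xA^{k+1}=A^{k}$ and $Ax^{2}=x$, so $A^{\scriptsize\textcircled{\tiny \dag}}\in T_l(A)$ — and likewise $F^{D}\in T_l(F)$. Thus (1) is an absorption law of exactly the shape treated in Theorem \ref{pD3.8}, with $a^{+}=A^{\scriptsize\textcircled{\tiny \dag}}$ and $f^{+}=F^{D}$. I would prove (1)$\Leftrightarrow$(3), then (2)$\Leftrightarrow$(3), and finally the matrix characterization (3)$\Leftrightarrow$(4).

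The equivalence (1)$\Leftrightarrow$(3) is an immediate application of Theorem \ref{pD3.8} with $a^{+}=A^{\scriptsize\textcircled{\tiny \dag}}\in T_l(A)$ and $f^{+}=F^{D}\in T_l(F)$: its conclusion reads that $A^{\scriptsize\textcircled{\tiny \dag}}(A+F)F^{D}=A^{\scriptsize\textcircled{\tiny \dag}}+F^{D}$ holds if and only if $I+A^{\scriptsize\textcircled{\tiny \dag}}B$ is invertible and $F^{D}=(I+A^{\scriptsize\textcircled{\tiny \dag}}B)^{-1}A^{\scriptsize\textcircled{\tiny \dag}}$, which is precisely (3). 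For (2)$\Leftrightarrow$(3), note that both statements contain the invertibility of $I+A^{\scriptsize\textcircled{\tiny \dag}}B$, so Theorem \ref{pD3.3} applies with $a=A$, $b=B$, $f=F$ and $s={\rm i}(F)$. Since every complex matrix is Drazin invertible, condition (3) of the corollary coincides with Theorem \ref{pD3.3}(4), and, using Lemma \ref{pD3.4} to rewrite $F^{\scriptsize\textcircled{\tiny \dag}}=F^{D}$ whenever $F$ is $\ast$-DMP, condition (2) coincides with Theorem \ref{pD3.3}(3). Hence the internal equivalence Theorem \ref{pD3.3}(3)$\Leftrightarrow$(4) delivers (2)$\Leftrightarrow$(3).

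The remaining step (3)$\Leftrightarrow$(4) is the only genuinely matrix-theoretic part. I would first record that in the Schur form the core-EP inverse is $A^{\scriptsize\textcircled{\tiny \dag}}=U\left(\begin{smallmatrix} T^{-1}&0\\0&0\end{smallmatrix}\right)U^{*}$, whence $AA^{\scriptsize\textcircled{\tiny \dag}}=U\left(\begin{smallmatrix} I&0\\0&0\end{smallmatrix}\right)U^{*}$ and $I-AA^{\scriptsize\textcircled{\tiny \dag}}=U\left(\begin{smallmatrix} 0&0\\0&I\end{smallmatrix}\right)U^{*}$. By the internal equivalence (2)$\Leftrightarrow$(4) of Theorem \ref{pD3.3}, statement (3) is equivalent to: $I+A^{\scriptsize\textcircled{\tiny \dag}}B$ invertible, $FAA^{\scriptsize\textcircled{\tiny \dag}}=AA^{\scriptsize\textcircled{\tiny \dag}}F$, and $(I-AA^{\scriptsize\textcircled{\tiny \dag}})F^{s}=0$. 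Writing $F=U\left(\begin{smallmatrix} T+B_1'&S+B_2'\\ B_3'&M+B_4'\end{smallmatrix}\right)U^{*}$ and multiplying on either side by the block idempotent $AA^{\scriptsize\textcircled{\tiny \dag}}$, the commutation $FAA^{\scriptsize\textcircled{\tiny \dag}}=AA^{\scriptsize\textcircled{\tiny \dag}}F$ forces precisely $S+B_2'=0$ and $B_3'=0$. With these, $F$ is block diagonal, so $(I-AA^{\scriptsize\textcircled{\tiny \dag}})F^{s}=0$ reduces to $(M+B_4')^{s}=0$, i.e.\ $M+B_4'$ nilpotent; and since $I+A^{\scriptsize\textcircled{\tiny \dag}}B=U\left(\begin{smallmatrix} I+T^{-1}B_1'&T^{-1}B_2'\\0&I\end{smallmatrix}\right)U^{*}$, its invertibility is equivalent to that of $T+B_1'$. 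These four block conditions are exactly (4).

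I expect the only real obstacle to be the preliminary identification of the Schur-form expression of $A^{\scriptsize\textcircled{\tiny \dag}}$: verifying $xA^{k+1}=A^{k}$ requires the recursion $C_{k+1}=TC_{k}+SM^{k}$ for the off-diagonal blocks $C_k$ of $A^{k}$, which collapses to $C_{k+1}=TC_{k}$ once $M^{k}=0$, giving $T^{-1}C_{k+1}=C_{k}$ as needed; the Hermitian and outer-inverse conditions $AA^{\scriptsize\textcircled{\tiny \dag}}$ self-adjoint and $A(A^{\scriptsize\textcircled{\tiny \dag}})^{2}=A^{\scriptsize\textcircled{\tiny \dag}}$ are immediate from the block form. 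Once this is settled, every remaining implication is a one-line block multiplication, so the three equivalences close up without further difficulty.
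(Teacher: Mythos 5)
Your proposal is correct and follows exactly the route the paper intends for this corollary (which it states without explicit proof): the equivalence (1)$\Leftrightarrow$(3) is Theorem \ref{pD3.8} applied with $a^{+}=A^{\scriptsize\textcircled{\tiny \dag}}\in T_l(A)$ and $f^{+}=F^{D}\in T_l(F)$, the equivalence (2)$\Leftrightarrow$(3) is Theorem \ref{pD3.3}(3)$\Leftrightarrow$(4) with $s$ taken large enough that the index conditions are automatic, and (3)$\Leftrightarrow$(4) is the Schur-form block computation via Theorem \ref{pD3.3}(2). Your supporting verifications --- the block form $A^{\scriptsize\textcircled{\tiny \dag}}=U\left(\begin{smallmatrix} T^{-1}&0\\0&0\end{smallmatrix}\right)U^{*}$, the commutation $FAA^{\scriptsize\textcircled{\tiny \dag}}=AA^{\scriptsize\textcircled{\tiny \dag}}F$ forcing $S+B_2^{\prime}=0$ and $B_3^{\prime}=0$, and the reduction of the invertibility of $I+A^{\scriptsize\textcircled{\tiny \dag}}B$ to that of $T+B_1^{\prime}$ --- are all accurate.
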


The notation $A_{\scriptsize\textcircled{\tiny D}}$ represents the dual pseudo core inverse \cite{GC} of $A$. The equivalence bewteen conditions (1) and (2) below is shown in \cite{CaKW2000}.

\begin{corollary} The following statements are equivalent:
	\begin{itemize}
		\item[\rm(1)] $A^D(A+F)F^D=A^D+F^D$;
		\item[\rm(2)] $I+A^DB$ is invertible and $F^D=(I+A^DB)^{-1}A^D$;
		\item[\rm(3)] $A^{\scriptsize\textcircled{\tiny \dag}}(A+F)F^{\scriptsize\textcircled{\tiny \dag}}=A^{\scriptsize\textcircled{\tiny \dag}}+F^{\scriptsize\textcircled{\tiny \dag}}$ and $A_{\scriptsize\textcircled{\tiny D}}(A+F)F_{\scriptsize\textcircled{\tiny D}}=A_{\scriptsize\textcircled{\tiny D}}+F_{\scriptsize\textcircled{\tiny D}}$;
		\item[\rm(4)] $I+A^DB$ is invertible,  $F^{\scriptsize\textcircled{\tiny \dag}}=(I+A^DB)^{-1}A^{\scriptsize\textcircled{\tiny \dag}}$ and $F_{\scriptsize\textcircled{\tiny D}}=A_{\scriptsize\textcircled{\tiny D}}(I+B A^D)^{-1}$.
	\end{itemize}

\end{corollary}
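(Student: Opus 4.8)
The plan is to collapse all four statements onto the single idempotent identity $FF^{D}=AA^{D}$, invoking Theorem~\ref{pD3.8} three times (once for each kind of inverse) and then reading off the range and the null space of the oblique projector $AA^{D}$ from the two Hermitian projectors furnished by the pseudo core and the dual pseudo core inverses. Since $(1)\Leftrightarrow(2)$ is exactly \cite{CaKW2000}, the real work is $(1)\Leftrightarrow(3)$ and the translation of the ``mixed'' additive formulas in $(4)$.

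\emph{Step 1 (absorption laws as projector equalities).} Because $A^{D},A^{\scriptsize\textcircled{\tiny \dag}}\in T_{l}(A)$ and $F^{D},F^{\scriptsize\textcircled{\tiny \dag}}\in T_{l}(F)$, Theorem~\ref{pD3.8} applies verbatim and gives $(1)\Leftrightarrow FF^{D}=AA^{D}$ together with $A^{\scriptsize\textcircled{\tiny \dag}}(A+F)F^{\scriptsize\textcircled{\tiny \dag}}=A^{\scriptsize\textcircled{\tiny \dag}}+F^{\scriptsize\textcircled{\tiny \dag}}\Leftrightarrow FF^{\scriptsize\textcircled{\tiny \dag}}=AA^{\scriptsize\textcircled{\tiny \dag}}$. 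For the dual pseudo core inverse I would pass through the involution: one checks $A_{\scriptsize\textcircled{\tiny D}}=(A^{*\scriptsize\textcircled{\tiny \dag}})^{*}$, so applying the pseudo core case of Theorem~\ref{pD3.8} to the pair $F^{*},A^{*}$ and taking adjoints turns its statement into $A_{\scriptsize\textcircled{\tiny D}}(A+F)F_{\scriptsize\textcircled{\tiny D}}=A_{\scriptsize\textcircled{\tiny D}}+F_{\scriptsize\textcircled{\tiny D}}\Leftrightarrow F_{\scriptsize\textcircled{\tiny D}}F=A_{\scriptsize\textcircled{\tiny D}}A$. Thus $(3)$ is precisely the conjunction $FF^{\scriptsize\textcircled{\tiny \dag}}=AA^{\scriptsize\textcircled{\tiny \dag}}$ and $F_{\scriptsize\textcircled{\tiny D}}F=A_{\scriptsize\textcircled{\tiny D}}A$.

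\emph{Step 2 (the geometric heart, $(1)\Leftrightarrow(3)$).} Write $k=\mathrm{i}(A)$, $\ell=\mathrm{i}(F)$. Here I would use that $AA^{\scriptsize\textcircled{\tiny \dag}}$ is the orthogonal projector onto $\mathcal{R}(A^{k})$, that $A_{\scriptsize\textcircled{\tiny D}}A=(A^{*}A^{*\scriptsize\textcircled{\tiny \dag}})^{*}$ is the orthogonal projector onto $\mathcal{R}(A^{*k})=\mathcal{N}(A^{k})^{\perp}$, while $AA^{D}$ is the (possibly oblique) projector onto $\mathcal{R}(A^{k})$ along $\mathcal{N}(A^{k})$; likewise for $F$. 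Hence $FF^{\scriptsize\textcircled{\tiny \dag}}=AA^{\scriptsize\textcircled{\tiny \dag}}\Leftrightarrow\mathcal{R}(F^{\ell})=\mathcal{R}(A^{k})$ and $F_{\scriptsize\textcircled{\tiny D}}F=A_{\scriptsize\textcircled{\tiny D}}A\Leftrightarrow\mathcal{N}(F^{\ell})=\mathcal{N}(A^{k})$. Since an idempotent is determined by its range together with its null space, $FF^{D}=AA^{D}$ holds iff both $\mathcal{R}(F^{\ell})=\mathcal{R}(A^{k})$ and $\mathcal{N}(F^{\ell})=\mathcal{N}(A^{k})$, which is exactly $(3)$; combined with Step~1 this yields $(1)\Leftrightarrow(3)$.

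\emph{Step 3 (the mixed additive forms in $(4)$).} For $(2)\Rightarrow(4)$ I would use the factorizations $A^{\scriptsize\textcircled{\tiny \dag}}=A^{D}AA^{\scriptsize\textcircled{\tiny \dag}}$ and $A_{\scriptsize\textcircled{\tiny D}}=A_{\scriptsize\textcircled{\tiny D}}AA^{D}$ (immediate from Lemma~\ref{pD1.5} and its dual) and the standard identity $(I+A^{D}B)^{-1}A^{D}=A^{D}(I+BA^{D})^{-1}$. Assuming $(2)$, hence $(3)$ and the projector equalities of Step~1, I compute $F^{\scriptsize\textcircled{\tiny \dag}}=F^{D}(FF^{\scriptsize\textcircled{\tiny \dag}})=(I+A^{D}B)^{-1}A^{D}\,AA^{\scriptsize\textcircled{\tiny \dag}}=(I+A^{D}B)^{-1}A^{\scriptsize\textcircled{\tiny \dag}}$ and, dually, $F_{\scriptsize\textcircled{\tiny D}}=(F_{\scriptsize\textcircled{\tiny D}}F)F^{D}=A_{\scriptsize\textcircled{\tiny D}}A\,A^{D}(I+BA^{D})^{-1}=A_{\scriptsize\textcircled{\tiny D}}(I+BA^{D})^{-1}$, which is $(4)$. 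For $(4)\Rightarrow(3)$ I would argue with ranges: as $\mathcal{R}(F^{\scriptsize\textcircled{\tiny \dag}})=\mathcal{R}(F^{\ell})$ and $I+A^{D}B$ leaves $\mathcal{R}(A^{k})$ invariant (since $A^{D}B$ maps into $\mathcal{R}(A^{D})=\mathcal{R}(A^{k})$), the hypothesis $F^{\scriptsize\textcircled{\tiny \dag}}=(I+A^{D}B)^{-1}A^{\scriptsize\textcircled{\tiny \dag}}$ forces $\mathcal{R}(F^{\ell})=(I+A^{D}B)^{-1}\mathcal{R}(A^{k})=\mathcal{R}(A^{k})$, i.e. $FF^{\scriptsize\textcircled{\tiny \dag}}=AA^{\scriptsize\textcircled{\tiny \dag}}$; the adjoint version of this argument applied to $F_{\scriptsize\textcircled{\tiny D}}=A_{\scriptsize\textcircled{\tiny D}}(I+BA^{D})^{-1}$ gives $\mathcal{N}(F^{\ell})=\mathcal{N}(A^{k})$, whence $(3)$.

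The projector bookkeeping is routine; the genuine obstacle is Step~2, namely establishing cleanly that the pseudo core inverse only records the range $\mathcal{R}(A^{k})$ while the dual pseudo core inverse only records the null space $\mathcal{N}(A^{k})$, and that these two \emph{orthogonal} projectors jointly reconstruct the \emph{oblique} projector $AA^{D}$. Some care is also needed to keep the indices $k=\mathrm{i}(A)$ and $\ell=\mathrm{i}(F)$ distinct (they coincide only once $FF^{D}=AA^{D}$) and to verify the adjoint identities $A_{\scriptsize\textcircled{\tiny D}}=(A^{*\scriptsize\textcircled{\tiny \dag}})^{*}$ and $A_{\scriptsize\textcircled{\tiny D}}A=(A^{*}A^{*\scriptsize\textcircled{\tiny \dag}})^{*}$ that let the dual statements be deduced from the pseudo core ones.
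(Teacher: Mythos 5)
Your proof is correct, but it follows a genuinely different route from the one the paper sets up. The paper gives no written proof of this corollary: it is presented as an application of the preceding ring-theoretic results (with only the remark that (1)$\Leftrightarrow$(2) comes from \cite{CaKW2000}), and the intended derivation stays algebraic — chain (1)$\Leftrightarrow$(2) through Theorem \ref{pD3.18}, translate (2) into the conditions of Theorem \ref{pD2.10}, pass between $A^{D}$ and $A^{\scriptsize\textcircled{\tiny \dag}}$ with Lemma \ref{2.3}, and then apply Theorem \ref{pD3.1} and the absorption-law theorem for pseudo core inverses together with their right-handed duals, all of which works in any ring with involution. You keep only one algebraic ingredient, the equivalence ``absorption law $\Leftrightarrow ff^{+}=aa^{+}$'' from the proof of Theorem \ref{pD3.8}, and then switch to geometry specific to $\mathbb{C}^{n\times n}$: $AA^{\scriptsize\textcircled{\tiny \dag}}$ is the orthogonal projector onto $\mathcal{R}(A^{k})$, $A_{\scriptsize\textcircled{\tiny D}}A$ the orthogonal projector onto $\mathcal{N}(A^{k})^{\perp}$, $AA^{D}$ the oblique projector onto $\mathcal{R}(A^{k})$ along $\mathcal{N}(A^{k})$, and an idempotent is determined by its range and kernel. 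This makes (1)$\Leftrightarrow$(3) conceptually transparent — it is exactly the ``equal eigenprojections at zero'' phenomenon of \cite{CaKW2000} split into a range half and a kernel half. Your involution bookkeeping is also right: taking adjoints of $A_{\scriptsize\textcircled{\tiny D}}(A+F)F_{\scriptsize\textcircled{\tiny D}}=A_{\scriptsize\textcircled{\tiny D}}+F_{\scriptsize\textcircled{\tiny D}}$ swaps both the order of the factors and the roles of $A$ and $F$, so Theorem \ref{pD3.8} applied to the pair $(F^{*},A^{*})$ with perturbation $-B^{*}$ is precisely the statement needed, and $(A_{\scriptsize\textcircled{\tiny D}})^{*}=(A^{*})^{\scriptsize\textcircled{\tiny D}}$ follows by starring the defining equations; the cycle (1)$\Leftrightarrow$(2)$\Leftrightarrow$(3), (2)$\Rightarrow$(4)$\Rightarrow$(3) then closes the four-way equivalence. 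The trade-off: the paper's route avoids any dimension counting (which you do need, to see that the invertible matrix $I+A^{D}B$ maps $\mathcal{R}(A^{k})$ \emph{onto} itself rather than merely into it) and generalizes verbatim to rings with involution, whereas your argument is shorter for matrices and explains structurally why two Hermitian projectors — one recording $\mathcal{R}(A^{k})$, the other $\mathcal{N}(A^{k})$ — jointly reconstruct the oblique projector $AA^{D}$.
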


 The weak group inverse \cite{WC}  of $A$ is define as  $A^{\scriptsize\textcircled{\tiny W}}=(A^{\scriptsize\textcircled{\tiny \dag}})^2A$. Using \cite[Proposition 4.10]{Zhouhace2025}, we give the next result.
\begin{corollary} The following statements are equivalent:
	\begin{itemize}
		\item[\rm(1)] $A^{\scriptsize\textcircled{\tiny W}}(A+F)F^{\scriptsize\textcircled{\tiny W}}=A^{\scriptsize\textcircled{\tiny W}}+F^{\scriptsize\textcircled{\tiny W}}$;
		\item[\rm(2)] $I+A^{\scriptsize\textcircled{\tiny W}}B$ is invertible and $F^{\scriptsize\textcircled{\tiny W}}=(I+A^{\scriptsize\textcircled{\tiny W}}B)^{-1}A^{\scriptsize\textcircled{\tiny W}}$;
		\item[\rm(3)] $B_2^{\prime}=B_1^{\prime}T^{-1}S$, $B_3^{\prime}=0$, $T+B_1^{\prime}$ is invertible and $M+B_4^{\prime}$ is nilpotent;
       \item[\rm(4)] $B_2^{\prime}=B_1^{\prime}T^{-1}S$, $A^{\scriptsize\textcircled{\tiny \dag}}(A+F)F^{\scriptsize\textcircled{\tiny \dag}}=A^{\scriptsize\textcircled{\tiny \dag}}+F^{\scriptsize\textcircled{\tiny \dag}}$.
	\end{itemize}
\end{corollary}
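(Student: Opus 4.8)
The plan is to reduce the five-fold equivalence to two easy equivalences and one genuine computation. First, $(1)\Leftrightarrow(2)$ follows directly from Theorem~\ref{pD3.8}: the weak group inverse $A^{\scriptsize\textcircled{\tiny W}}=(A^{\scriptsize\textcircled{\tiny \dag}})^{2}A$ is a minimal weak Drazin inverse of $A$ (a short check gives $A(A^{\scriptsize\textcircled{\tiny W}})^{2}=A^{\scriptsize\textcircled{\tiny W}}$ and $A^{\scriptsize\textcircled{\tiny W}}A^{k+1}=A^{k}$ for $k={\rm i}(A)$, so $A^{\scriptsize\textcircled{\tiny W}}\in T_{l}(A)$, and likewise $F^{\scriptsize\textcircled{\tiny W}}\in T_{l}(F)$), so Theorem~\ref{pD3.8} applies with $a^{+}=A^{\scriptsize\textcircled{\tiny W}}$, $f^{+}=F^{\scriptsize\textcircled{\tiny W}}$ and $b=B$. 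Moreover, the identity quoted from \cite{LiWenDe2023} at the start of the proof of Theorem~\ref{pD3.8} shows that statement~(1) is equivalent to the single equation $FF^{\scriptsize\textcircled{\tiny W}}=AA^{\scriptsize\textcircled{\tiny W}}$, which is the form I would actually work with. Second, $(3)\Leftrightarrow(4)$ is bookkeeping: condition~(4) is the conjunction of $B_{2}^{\prime}=B_{1}^{\prime}T^{-1}S$ with the core-EP absorption law, and the preceding corollary characterizing the core-EP absorption law shows that the latter is equivalent to ``$B_{3}^{\prime}=0$, $T+B_{1}^{\prime}$ invertible and $M+B_{4}^{\prime}$ nilpotent''; substituting converts (4) into (3).

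The substance is the equivalence $FF^{\scriptsize\textcircled{\tiny W}}=AA^{\scriptsize\textcircled{\tiny W}}\Leftrightarrow(3)$, which I would settle by computing in the Schur basis $U$ of $A$, where
$$A^{\scriptsize\textcircled{\tiny \dag}}=U\begin{pmatrix}T^{-1}&0\\0&0\end{pmatrix}U^{\ast},\qquad A^{\scriptsize\textcircled{\tiny W}}=U\begin{pmatrix}T^{-1}&T^{-2}S\\0&0\end{pmatrix}U^{\ast},\qquad AA^{\scriptsize\textcircled{\tiny W}}=U\begin{pmatrix}I&T^{-1}S\\0&0\end{pmatrix}U^{\ast}.$$
For the forward direction, comparing ranges in $FF^{\scriptsize\textcircled{\tiny W}}=AA^{\scriptsize\textcircled{\tiny W}}$ and using $\mathcal{R}(AA^{\scriptsize\textcircled{\tiny W}})=\mathcal{R}(AA^{\scriptsize\textcircled{\tiny \dag}})$ and $\mathcal{R}(FF^{\scriptsize\textcircled{\tiny W}})=\mathcal{R}(FF^{\scriptsize\textcircled{\tiny \dag}})$ (both visible from the first block column of the matrices above) yields $\mathcal{R}(FF^{\scriptsize\textcircled{\tiny \dag}})=\mathcal{R}(AA^{\scriptsize\textcircled{\tiny \dag}})$; since $FF^{\scriptsize\textcircled{\tiny \dag}}$ and $AA^{\scriptsize\textcircled{\tiny \dag}}$ are the orthogonal projectors onto these subspaces, they coincide, i.e. the core-EP absorption law holds. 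By the core-EP corollary, $F$ is then block upper triangular in the $U$-basis with $T+B_{1}^{\prime}$ invertible and $M+B_{4}^{\prime}$ nilpotent, so this is exactly the Schur form of $F$ and
$$FF^{\scriptsize\textcircled{\tiny W}}=U\begin{pmatrix}I&(T+B_{1}^{\prime})^{-1}(S+B_{2}^{\prime})\\0&0\end{pmatrix}U^{\ast}.$$
Equating the $(1,2)$ block with that of $AA^{\scriptsize\textcircled{\tiny W}}$ gives $(T+B_{1}^{\prime})^{-1}(S+B_{2}^{\prime})=T^{-1}S$, i.e. $B_{2}^{\prime}=B_{1}^{\prime}T^{-1}S$, which together with the core-EP conditions is precisely (3). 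The converse $(3)\Rightarrow FF^{\scriptsize\textcircled{\tiny W}}=AA^{\scriptsize\textcircled{\tiny W}}$ is the same chain read backwards, and here \cite[Proposition 4.10]{Zhouhace2025} is what legitimizes the Schur-form expression for $F^{\scriptsize\textcircled{\tiny W}}$ and for the projector $FF^{\scriptsize\textcircled{\tiny W}}$.

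I expect the main obstacle to be the implication $FF^{\scriptsize\textcircled{\tiny W}}=AA^{\scriptsize\textcircled{\tiny W}}\Rightarrow$ core-EP absorption, that is, extracting equality of the two \emph{orthogonal} projectors from equality of the two \emph{oblique} weak-group projectors. The range computation above is the device that makes this work: it reduces the claim to the uniqueness of the orthogonal projector onto the fixed subspace $\mathcal{R}(A^{{\rm i}(A)})$. Once core-EP absorption is secured, $F$ is genuinely triangular in the fixed unitary basis $U$ and every remaining step is a routine $2\times2$ block multiplication; the only point requiring care is to perform all comparisons in the Schur basis of $A$ (not of $F$), which is exactly what the condition $B_{3}^{\prime}=0$ from the core-EP corollary guarantees, by making the $U$-triangular form of $F$ its genuine Schur form.
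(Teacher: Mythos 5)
Your proof is correct, and it is genuinely more self-contained than what the paper does: the paper supplies no argument at all for this corollary, merely prefacing it with ``Using \cite[Proposition 4.10]{Zhouhace2025}, we give the next result'', i.e., the weak-group-inverse content is outsourced to an external proposition on $m$-weak group inverses. Your route rebuilds that content from the paper's own machinery: (i) the observation that $A^{\scriptsize\textcircled{\tiny W}}=(A^{\scriptsize\textcircled{\tiny \dag}})^{2}A$ lies in $T_{l}(A)$ (your verification is right, and it can even be done ring-theoretically via Lemmas \ref{a2} and \ref{1.1}), so that Theorem \ref{pD3.8} gives $(1)\Leftrightarrow(2)$ and reduces $(1)$ to the single equation $FF^{\scriptsize\textcircled{\tiny W}}=AA^{\scriptsize\textcircled{\tiny W}}$; (ii) the range argument $\mathcal{R}(FF^{\scriptsize\textcircled{\tiny W}})=\mathcal{R}(FF^{\scriptsize\textcircled{\tiny \dag}})$ and $\mathcal{R}(AA^{\scriptsize\textcircled{\tiny W}})=\mathcal{R}(AA^{\scriptsize\textcircled{\tiny \dag}})$, which upgrades equality of the oblique projectors to equality of the Hermitian projectors $FF^{\scriptsize\textcircled{\tiny \dag}}=AA^{\scriptsize\textcircled{\tiny \dag}}$ and thereby imports the unproved-but-stated core-EP corollary ($B_{3}^{\prime}=0$, $T+B_{1}^{\prime}$ invertible, $M+B_{4}^{\prime}$ nilpotent); and (iii) a block computation equating the $(1,2)$ entries, which is exactly the extra condition $B_{2}^{\prime}=B_{1}^{\prime}T^{-1}S$; the bookkeeping $(3)\Leftrightarrow(4)$ is as trivial as you say. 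All block identities you state check out. One terminological caution, which does not affect correctness: after securing $B_{3}^{\prime}=0$ with $T+B_{1}^{\prime}$ invertible and $M+B_{4}^{\prime}$ nilpotent, the resulting $U$-block-triangular form of $F$ is not literally a Schur form (its diagonal blocks need not be triangular); what your computation actually uses is that the formulas $F^{\scriptsize\textcircled{\tiny \dag}}=U\left(\begin{matrix}(T+B_{1}^{\prime})^{-1}&0\\0&0\end{matrix}\right)U^{\ast}$ and $F^{\scriptsize\textcircled{\tiny W}}=(F^{\scriptsize\textcircled{\tiny \dag}})^{2}F$ are valid for any unitary block upper-triangularization with invertible leading block and nilpotent trailing block, a standard fact that is easy to verify directly from the defining equations and is all that is needed.
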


\centerline {\bf Acknowledgments}   The first and second authors are supported by the National Natural Science Foundation of China (Grant No. 12171083). The third author has been partially supported by Universidad Nacional de La Pampa (Grant Resol. 172/2024), by Grant PGI 24/ZL22, Departamento de Matem\'atica, Universidad Nacional del Sur (UNS), Argentina; by Ministerio de Ciencia, Innovaci\'on y Universidades of Spain (Grant Redes de Investigaci\'on, MICINN-RED2022-134176-T), and by Universidad Nacional de R\'{\i}o Cuarto (Grant Resol. RR 449/2024). We  thank Dr. Mengmeng Zhou for meaningful suggestions.

\end{document}